\theoremstyle{plain}
\newtheorem{thm}{\protect\theoremname}
  \theoremstyle{plain}
  \newtheorem{prop}[thm]{\protect\propositionname}
  \theoremstyle{plain}
  \newtheorem{cor}[thm]{\protect\corollaryname}
  \providecommand{\corollaryname}{Corollary}
  \providecommand{\propositionname}{Proposition}
\providecommand{\theoremname}{Theorem}
\begin{document}

\title{Deterministic Bayesian information fusion\\
 and the analysis of its performance}

\author{Gaurav Thakur%
\thanks{\noindent Digital Signal Corporation, 14000 Thunderbolt Place, Chantilly,
VA 20151, email: gthakur@alumni.princeton.edu. This work was done
while the author was at The MITRE Corporation, McLean, VA 22102. The
author would like to thank the Joint IED Defeat Organization (JIEDDO)
for supporting and funding this work, and Dr. Dave Colella and Dr.
Garry Jacyna of MITRE for providing valuable feedback and suggestions.
Approved for Public Release; Distribution Unlimited. 13-3248.%
}}

\date{October 16, 2014}
\maketitle
\begin{abstract}
This paper develops a mathematical and computational framework for
analyzing the expected performance of Bayesian data fusion, or joint
statistical inference, within a sensor network. We use variational
techniques to obtain the posterior expectation as the optimal fusion
rule under a deterministic constraint and a quadratic cost, and study
the smoothness and other properties of its classification performance.
For a certain class of fusion problems, we prove that this fusion
rule is also optimal in a much wider sense and satisfies strong asymptotic
convergence results. We show how these results apply to a variety
of examples with Gaussian, exponential and other statistics, and discuss
computational methods for determining the fusion system's performance
in more general, large-scale problems. These results are motivated
by studying the performance of fusing multi-modal radar and acoustic
sensors for detecting explosive substances, but have broad applicability
to other Bayesian decision problems.
\end{abstract}
Keywords: sensor data fusion, Bayes optimal decisions, computational
statistics, machine learning, calculus of variations, probabilistic
graphical models\\

AMS subject classification: 62C10, 49K30, 46N30

\section{Introduction\label{SecIntro}}

Sensor networks are ubiquitous across many different domains, including
wireless communications, temperature and process control, area surveillance,
object tracking and numerous other fields \cite{ASSC02,CK03}. Large
performance gains can be achieved in such networks by performing \textit{data
fusion} between the sensors, or combining information from the individual
sensors to reach system-level decisions \cite{Du07,Mi07,Va96,XT09}.
The sensors are typically connected by wireless links to either a
separate information collector (centralized fusion) or to each other
(distributed fusion). Elementary fusion rules based on Boolean logic
are used in many contexts due to their simplicity and ease of implementation.
On the other hand, in most situations we have some knowledge of the
statistical properties of the sensors' outputs, and designing fusion
rules that take this into account can provide much better performance
\cite{NLF07,Va96}. The fusion rule can be built to satisfy any of
various statistical optimality criteria, such as achieving the maximum
likelihood or the minimum Bayes risk, under any other constraints
of the problem \cite{NLF07}. Sensor information fusion can also be
understood as a special case of the more general problem of statistical
data reduction, where the goal is to reduce information from a high-dimensional
space into a low-dimensional one in some optimal manner.\\

In many sensor fusion applications, it is important for the fusion
rule to be a deterministic function of the sensor outputs. Fusion
techniques that incorporate randomness are widely used in the sensing
literature and can be more easily optimized to achieve given performance
targets, such as false or correct classification probabilities \cite{Po94}.
However, in certain applications such as the detection of explosive
compounds, it is common for the number of positive targets to be several
orders of magnitude smaller than the number of negative ones, which
means that randomized fusion rules have a large variance and rarely
achieve their expected theoretical performance on realistic sample
sizes. For simple, binary decision-level fusion problems, deterministic
rules are easy to find \cite{Va96}, but in general, requiring the
fusion rule to be deterministic effectively introduces a nonconvex
constraint that is difficult to incorporate into a numerical optimization
framework. This type of constraint also complicates the calculation
of a fusion rule's expected classification performance, which is a
key component of modeling and simulation efforts to design sensor
layouts and to perform trade studies between different sensor configurations
\cite{Th13}.\\

The goals of this paper are to study the fusion rule that is Bayes
optimal among all deterministic fusion rules, to investigate its mathematical
properties under different cost criteria and other problem constraints,
and to develop a computational framework for finding its expected
performance. These results are motivated by the standoff detection
of threat substances using multi-modal sensors in a centralized fusion
network. However, we formulate the problem in an abstract setting
that makes minimal assumptions on the details of the sensors or the
type of information they produce, in contrast to the relatively well-defined
situations in much of the sensing literature (e.g., \cite{CJKV04,Du07,SS06}).
We first use variational techniques to derive the standard Bayesian
posterior mean as the optimal deterministic fusion rule under a quadratic
cost. We show that the resulting system's classification performance
is a smooth function under some regularity constraints on the problem,
and describe extensions to more general settings where the posterior
mean no longer applies. For a certain class of fusion scenarios with
sub-Gaussian priors, we extend these results beyond the classical
setting of a quadratic cost, showing that the same fusion rule remains
optimal under higher order cost functions and that its classification
performance exhibits stronger, pointwise-like asymptotic behavior.
This mathematical theory is developed in Section \ref{SecMain}, with
the proofs of the theorems deferred to Appendix A. In Section \ref{SecExamples},
we apply these results to several illustrative examples with Gaussian,
exponential and other statistics for the sensors and examine the different
types of behavior that they exhibit. In Section \ref{SecNumerical},
we finally discuss efficient computational methods for finding the
performance of the fusion rule in practice, based on Monte Carlo integration
techniques commonly used in machine learning and other fields.

\section{Mathematical framework for data fusion\label{SecMain}}

Suppose we have $M$ sensors (random variables) $\{A_{m}\}_{1\leq m\le M}$,
each observing an object (hypothesis) $H$ and producing outputs that
are sent into a fusion center $C$ (see Figure \ref{FigFusion}).
The sensor outputs could represent simple true/false decisions, choices
between several distinct classes, or collections of multiple continuous-valued
physical features of an object. We want to design a fusion rule at
$C$ that uses the information from the $A_{m}$ to minimize the Bayes
risk, or in other words, the expected cost of making a wrong decision
\cite{Va96}. In real world problems, the sensor outputs $A_{m}$
are typically deterministic functions of the random object $H$ and
any background noise. We thus make the standard assumption that the
$\{A_{m}\}$ are all conditionally independent given $H$, which is
valid as long as the noise is white and each sensor observes the object
at a slightly different time.\\
\begin{figure}[h]
\centering{}\includegraphics[trim=1.3in 1.2in 1in 1in, clip=true, scale=0.5]{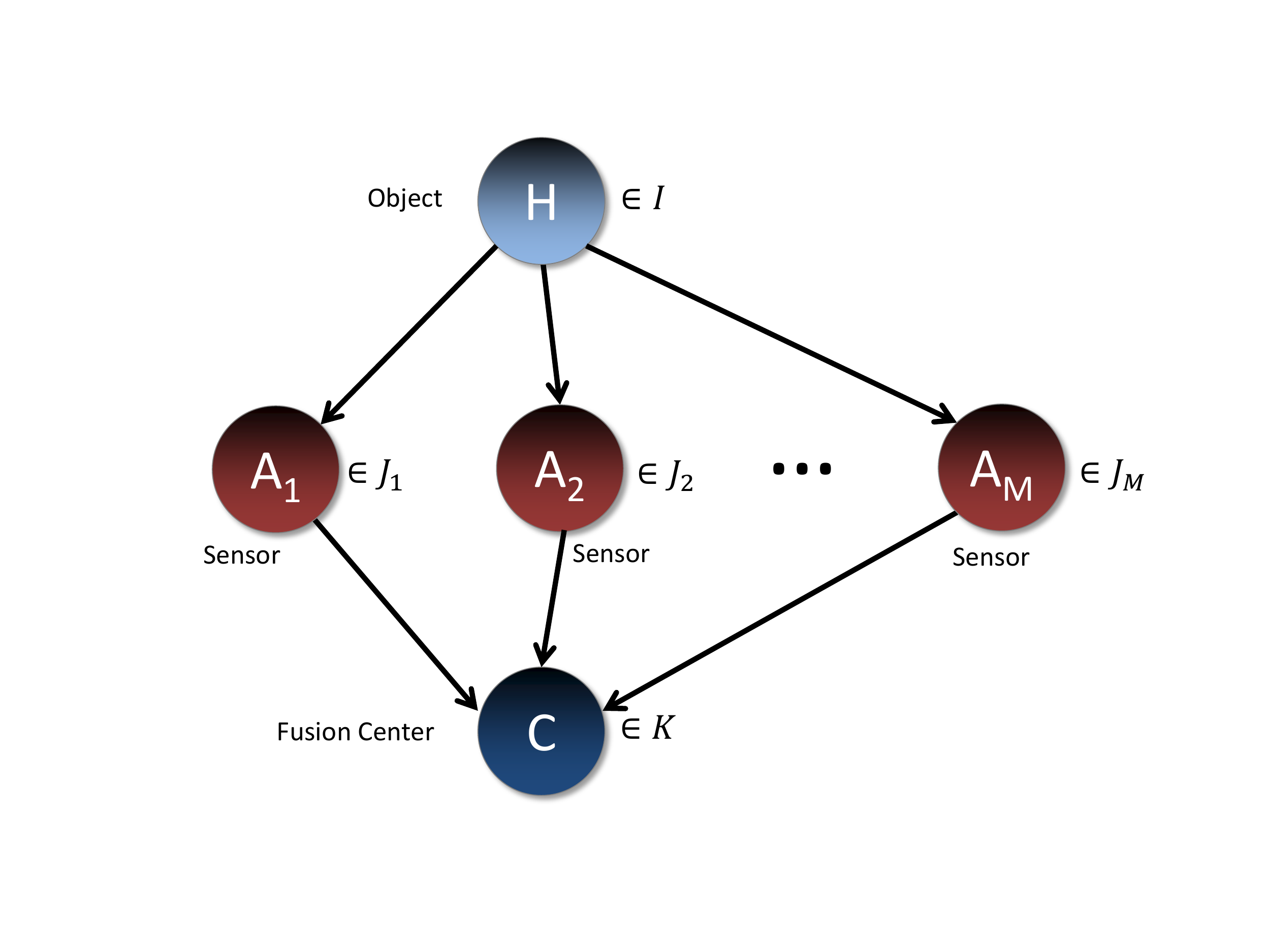}\protect\caption{\label{FigFusion}A probabilistic graphical model of the data fusion
problem.}
\end{figure}

In what follows, we will denote the density and distribution of any
random variable $X$ by $\mathbf{d}_{X}(x)$ and $\mathbf{d}_{X}(x)dx$
respectively, and use the vector notation $\mathbf{A}=\{A_{m}\}_{1\leq m\leq M}$
(where each $A_{m}$ may itself be a vector). For any closed set $K\subset\mathbb{R}$,
we let $C^{0}(K)$, $C_{c}^{0}(K)$ and $C^{\infty}(K)$ respectively
be the space of continuous, bounded functions on $K$, the space of
$C^{0}(K)$ functions with compact support, and the space of $C^{0}(K)$
functions that are smooth (infinitely differentiable with bounded
derivatives on $K$). In order to incorporate deterministic fusion,
we view a density as a first-order generalized function \cite{Ho90,SW71}
(i.e., a Schwartz distribution, but we use the former terminology
to avoid confusion with the meaning of the term in probability theory).
This formulation allows us to consider delta functions and other such
function-like objects. It is equivalent to treating the distribution
as a measure with singular components, but the generalized function
framework is easier to work with in developing the theory below. The
space of first-order generalized functions can be identified with
the dual space of $C_{c}^{0}(K)$ and is denoted $D^{0}(K)$.\\

The fusion problem can now be described as follows. We are given the
densities $\mathbf{d}_{A_{m}|H}(a,h)$ for $1\leq m\le M$, the prior
$\mathbf{d}_{H}(h)$ and a cost function $\mathrm{cost}(c,h)$. We
want to minimize the Bayes risk, or the expected cost of making a
decision $E(\mathrm{cost}(C,H))$. The sensor densities $\mathbf{d}_{A_{m}|H}$
represent either generic statistical models trained from experimental
lab data or mathematical descriptions of the underlying sensor physics,
and the prior $\mathbf{d}_{H}$ comes from domain-specific operational
knowledge. The fusion rule can be expressed as the density $\mathbf{d}_{C|\mathbf{A}}(c,\mathbf{a})$.
We usually want the fusion center $C$ to produce the same type of
information that $H$ represents, so it is reasonable to take the
cost function to be a simple metric between $C$ and $H$ and motivates
the choice $\mathrm{cost}(c,h)=W(c-h)$ for an appropriate increasing
function $W$.\\

We now proceed to establish several results on the optimal deterministic
fusion rule and its performance characteristics. To maintain the clarity
of exposition, we defer the proofs of these theorems to Appendix A.
We first address the classical setting of a quadratic cost $W(x)=\frac{1}{2}x^{2}$.
The optimal fusion rule in this case reduces to the usual posterior
expectation $E(H|C)$ corresponding to a naive Bayes classifier, but
we establish it under the abstract formulation discussed above and
use a variational argument in the proof that will be the foundation
for later results. For the rest of the section, we also assume that
all the conditions of Proposition \ref{ThmDetFusion} are satisfied
unless stated otherwise.\\
\\
\\

\begin{prop}
\noindent \label{ThmDetFusion}Suppose we have a given \textbf{object
space} $I\subset\mathbb{R}$, a collection of \textbf{feature spaces}
$\{J_{m}\}_{1\leq m\leq M}$ with $J_{m}\subset\mathbb{R}^{N_{m}}$,
and a \textbf{decision space} $K$. Suppose $K$ is a single interval
with $I\subset K$. Let the fusion problem be set up as described
above, with the random variables $H$, $A_{m}$ and $C$ respectively
taking on values in $I$, $J_{m}$ and $K$, and let $\mathrm{cost}(c,h)=\frac{1}{2}(c-h)^{2}$.
Assume that $\mathbf{d}_{H}\in D^{0}(I)$, $E(H^{2})<\infty$ and
for every $m$, $\mathbf{d}_{A_{m}|H}\in C^{0}(J_{m}\times I)$ and
$\mathbf{d}_{A_{m}|H}>0$. Then there is an almost everywhere unique,
deterministic, Bayes optimal fusion rule that combines the $\{A_{m}\}$
and produces outputs in $K$. It is given by the posterior expectation
\begin{equation}
f(\mathbf{A})=\frac{\int_{I}h\left(\prod_{m=1}^{M}\mathbf{d}_{A_{m}|H}(A_{m},h)\right)\mathbf{d}_{H}(h)dh}{\int_{I}\left(\prod_{m=1}^{M}\mathbf{d}_{A_{m}|H}(A_{m},h)\right)\mathbf{d}_{H}(h)dh}.\label{FusionRule}
\end{equation}

\end{prop}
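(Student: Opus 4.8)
The plan is to set up the Bayes risk as a functional of the conditional density $\mathbf{d}_{C|\mathbf{A}}$ and minimize it by a calculus-of-variations argument, using the deterministic constraint to force the minimizer to be a delta function in $c$. First I would write the Bayes risk, after conditioning on $\mathbf{A}=\mathbf{a}$ and using conditional independence of the $A_m$ given $H$, as
\begin{equation}
E(\mathrm{cost}(C,H))=\int\!\!\int_K \tfrac12(c-h)^2\,\mathbf{d}_{C|\mathbf{A}}(c,\mathbf{a})\,\mathbf{d}_{H|\mathbf{A}}(h,\mathbf{a})\,dc\,dh\;\mathbf{d}_{\mathbf A}(\mathbf a)\,d\mathbf a,
\end{equation}
where by Bayes' rule $\mathbf{d}_{H|\mathbf{A}}(h,\mathbf{a})\propto\big(\prod_m \mathbf{d}_{A_m|H}(a_m,h)\big)\mathbf{d}_H(h)$; the positivity and continuity hypotheses on $\mathbf{d}_{A_m|H}$ together with $\mathbf{d}_H\in D^0(I)$ guarantee this posterior is a well-defined element of $D^0(I)$ with finite second moment (using $E(H^2)<\infty$), so the inner integral is finite. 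For each fixed $\mathbf{a}$ the problem decouples, and I want to minimize $g(\mathbf{a})=\int_K \big(\int_I \tfrac12(c-h)^2\,\mathbf{d}_{H|\mathbf{A}}(h,\mathbf a)\,dh\big)\mathbf{d}_{C|\mathbf{A}}(c,\mathbf a)\,dc$ over admissible fusion densities $\mathbf{d}_{C|\mathbf A}(\cdot,\mathbf a)$.

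Next I would carry out the variational step. Write $q_{\mathbf a}(c)=\int_I \tfrac12(c-h)^2\,\mathbf{d}_{H|\mathbf{A}}(h,\mathbf a)\,dh$, a continuous, strictly convex function of $c$ on the interval $K$ (it is a quadratic with positive leading coefficient), and note $g(\mathbf a)=\langle \mathbf{d}_{C|\mathbf A}(\cdot,\mathbf a),\,q_{\mathbf a}\rangle$ is \emph{linear} in the fusion density. Over all probability densities on $K$ — i.e. nonnegative generalized functions of unit mass, pairing nonnegatively against $C_c^0(K)$ test functions — the infimum of a linear functional $\langle\mu,q_{\mathbf a}\rangle$ is $\inf_{c\in K} q_{\mathbf a}(c)$, attained only by $\mu=\delta_{c^*}$ where $c^*=c^*(\mathbf a)$ is the unique minimizer of $q_{\mathbf a}$; this is exactly where the deterministic constraint is doing its work, since a deterministic rule is precisely one of the form $\mathbf{d}_{C|\mathbf A}(c,\mathbf a)=\delta(c-f(\mathbf a))$, and any non-degenerate (randomized) density gives a strictly larger value by strict convexity of $q_{\mathbf a}$ and Jensen. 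A first-order stationarity computation, $q_{\mathbf a}'(c)=\int_I (c-h)\,\mathbf{d}_{H|\mathbf{A}}(h,\mathbf a)\,dh=0$, then identifies $c^*(\mathbf a)=\int_I h\,\mathbf{d}_{H|\mathbf{A}}(h,\mathbf a)\,dh$, which is formula \eqref{FusionRule} after substituting the Bayes expression for the posterior. I should check $c^*(\mathbf a)\in K$: it is a weighted average of points in $I$, hence lies in the convex hull of $I$, which is contained in the interval $K$ by hypothesis — this is the one place the assumption $I\subset K$ with $K$ an interval is essential.

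Finally I would assemble the global statement from the pointwise one: the rule $f(\mathbf a)=c^*(\mathbf a)$ minimizes $g(\mathbf a)$ for each $\mathbf a$, hence minimizes the integral $E(\mathrm{cost}(C,H))$; measurability of $\mathbf a\mapsto f(\mathbf a)$ follows from continuity of the integrand in $\mathbf a$ (dominated convergence using continuity of $\mathbf{d}_{A_m|H}$ and integrability from $E(H^2)<\infty$), so the rule is a bona fide deterministic fusion rule. For \emph{almost everywhere uniqueness}, on the set where $\mathbf{d}_{\mathbf A}(\mathbf a)>0$ strict convexity of $q_{\mathbf a}$ forces the minimizing $\mu$ to be the single delta, so any other optimal rule agrees with $f$ a.e.; on the null set $\{\mathbf{d}_{\mathbf A}=0\}$ the rule is irrelevant. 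The main obstacle I anticipate is the rigorous handling of the generalized-function framework — justifying the interchange of integrations in the Bayes-risk functional when $\mathbf{d}_H$ may contain delta masses, and making precise the claim that the infimum of $\langle\mu,q_{\mathbf a}\rangle$ over unit-mass nonnegative elements of $D^0(K)$ is attained exactly at $\delta_{c^*}$; this likely requires approximating $q_{\mathbf a}$ from below by $C_c^0(K)$ test functions (or truncating $K$ to compact subintervals) and a limiting argument, rather than a one-line duality statement.
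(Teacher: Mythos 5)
Your proposal is correct and follows essentially the same route as the paper: reduce the deterministic constraint to $\mathbf{d}_{C|\mathbf{A}}(c,\mathbf{a})=\delta(c-f(\mathbf{a}))$, minimize the resulting quadratic functional pointwise in $\mathbf{a}$ (your first-order condition $q_{\mathbf{a}}'(c)=0$ is exactly the paper's Euler--Lagrange equation (\ref{EulerLagrange})), and verify feasibility $f(\mathbf{a})\in K$ from $I\subset K$ with $K$ an interval (the paper's bound (\ref{FusionBound}) is precisely your convex-hull observation). The only cosmetic difference is that the paper phrases the minimization as a quadratic form over the weighted Hilbert space $L^{2,H}(\mathbf{J})$ to get existence and a.e.\ uniqueness in one stroke, whereas you argue pointwise via strict convexity and Jensen; both are sound.
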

The spaces $I$, $J$ and $K$ respectively represent the type of
hidden information we want to estimate or classify between, the type
of information we observe from the sensors and the type of decision
we can make from those observations. We typically think of the objects
and decisions as scalar quantities that may be discrete or continuous,
while the feature spaces are potentially vector quantities (for example,
a camera sensor that outputs images). Since we allow $\mathbf{d}_{H}$
to be a generalized function, the results of Proposition \ref{ThmDetFusion}
cover the case where the object space $I$ is discrete and finite
by taking $\mathbf{d}_{H}$ to be a weighted sum of delta functions.
This is the case in most sensor fusion applications, where there are
a finite and known number of anomaly classes. Note that the deterministic
constraint ((\ref{Cond3}) in the proof) is what allows the fusion
rule to become the posterior expectation (\ref{FusionRule}), and
there can generally be other, random fusion rules (where $\mathbf{d}_{C|\mathbf{A}}$
is a density) that have better performance without this constraint
imposed. Under some additional constraints on the problem, we can
also study the properties of the classification performance of this
fusion rule, given by the density $\mathbf{d}_{C|H}$.
\begin{thm}
\label{ThmPerformance}Assume that $K$ and $\mathbf{J}$ are compact,
$\mathbf{d}_{A_{m}|H}(\cdot,h)\in C^{\infty}(J_{m})$ for all $m$,
and for each point $\mathbf{a}\in\mathbf{J}$ there is at least one
sensor $A_{m'}$ and feature $n$, $1\leq n\leq N_{m'}$, such that
\begin{equation}
\int_{I}\int_{I}\left(\mathbf{d}_{A_{m'}|H}(a_{m'},\tilde{h})\frac{\partial\mathbf{d}_{A_{m'}|H}(a_{m'},h)}{\partial(a_{m'})_{n}}-\mathbf{d}_{A_{m'}|H}(a_{m'},h)\frac{\partial\mathbf{d}_{A_{m'}|H}(a_{m'},\tilde{h})}{\partial(a_{m'})_{n}}\right)h\mathbf{d}_{H}(h)\mathbf{d}_{H}(\tilde{h})dhd\tilde{h}\not=0.\label{BalanceCond}
\end{equation}
Then the classification performance of the fusion rule (\ref{FusionRule}),
given by $\mathbf{d}_{C|H}(\cdot,h)$, is in $C^{\infty}(K)$ for
each $h\in I$.
\end{thm}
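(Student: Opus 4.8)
The plan is to obtain the conditional density $\mathbf{d}_{C|H}(\cdot,h)$ as a pushforward of the conditional density $\mathbf{d}_{\mathbf{A}|H}(\cdot,h) = \prod_m \mathbf{d}_{A_m|H}(\cdot,h)$ under the fusion map $f:\mathbf{J}\to K$ defined by (\ref{FusionRule}), and then to show this pushforward is smooth by combining (i) smoothness of $f$ itself, (ii) a coarea/submersion argument using the nonvanishing condition (\ref{BalanceCond}), and (iii) compactness to control boundary and integrability issues. Concretely, for a test function $\varphi\in C_c^0(K)$ we have $E(\varphi(C)\mid H=h) = \int_{\mathbf{J}} \varphi(f(\mathbf{a}))\,\mathbf{d}_{\mathbf{A}|H}(\mathbf{a},h)\,d\mathbf{a}$, so that $\mathbf{d}_{C|H}(c,h)$ is the density of the measure $f_{*}(\mathbf{d}_{\mathbf{A}|H}(\cdot,h)\,d\mathbf{a})$ on $K$. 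The first step is therefore to check that $f\in C^{\infty}(\mathbf{J})$: since each $\mathbf{d}_{A_m|H}(\cdot,h)\in C^{\infty}(J_m)$, the numerator and denominator in (\ref{FusionRule}) are smooth in $\mathbf{a}$ (differentiating under the integral sign is justified by compactness of $I$, which follows from $I\subset K$ with $K$ compact, together with $\mathbf{d}_{A_m|H}>0$ and continuity giving a positive lower bound on the denominator, and $E(H^2)<\infty$ controlling the $h$-moments), and the quotient of smooth functions with nonvanishing denominator is smooth.

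The second and central step is to show that $f$ is a submersion at every point of $\mathbf{J}$, i.e.\ that $\nabla f(\mathbf{a})\neq 0$ for all $\mathbf{a}$. A direct computation of $\partial f/\partial (a_{m'})_n$ via the quotient rule, using that $\mathbf{d}_{\mathbf{A}|H}$ factors over $m$ so only the $m'$-th factor depends on $(a_{m'})_n$, produces exactly the double integral appearing in (\ref{BalanceCond}) (up to a positive normalizing factor equal to the square of the denominator of (\ref{FusionRule})); antisymmetrizing the integrand in $(h,\tilde h)$ is the standard trick that turns $\partial f/\partial(a_{m'})_n = \big(\text{num}'\cdot\text{den} - \text{num}\cdot\text{den}'\big)/\text{den}^2$ into the symmetric double-integral form. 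Hence condition (\ref{BalanceCond}) is precisely the statement that this partial derivative is nonzero, so $f$ has a nonvanishing gradient everywhere on $\mathbf{J}$. Given this, for each fixed $h$ the pushforward density can be written by the coarea formula as $\mathbf{d}_{C|H}(c,h) = \int_{f^{-1}(c)} \frac{\mathbf{d}_{\mathbf{A}|H}(\mathbf{a},h)}{|\nabla f(\mathbf{a})|}\, d\mathcal{H}^{\dim\mathbf{J}-1}(\mathbf{a})$, and each level set $f^{-1}(c)$ is a compact smooth hypersurface (compact because $\mathbf{J}$ is compact and $f$ continuous) varying smoothly with $c$; the integrand is smooth in $\mathbf{a}$ and, crucially, $|\nabla f|$ is bounded below away from $0$ on the compact set $\mathbf{J}$. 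Differentiating this level-set integral repeatedly in $c$ — parametrizing $f^{-1}(c)$ locally via the implicit function theorem and differentiating under the integral — yields that $\mathbf{d}_{C|H}(\cdot,h)\in C^{\infty}(K)$, with bounded derivatives by compactness.

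The main obstacle I anticipate is step (ii): verifying carefully that the quotient-rule derivative of $f$ really equals the antisymmetrized double integral in (\ref{BalanceCond}), and then making the coarea/level-set differentiation rigorous at the boundary of $\mathbf{J}$ and near points where a level set $f^{-1}(c)$ meets $\partial\mathbf{J}$ — there the hypersurface is a manifold with boundary and one must check the boundary contributions are smooth (or argue that the endpoints of $K$ are handled by the one-sided-derivative convention implicit in the definition of $C^\infty(K)$ for $K$ an interval). A cleaner alternative that avoids the coarea bookkeeping is to argue locally: near any $\mathbf{a}_0$, since some $\partial f/\partial(a_{m'})_n \neq 0$, complete $f$ to a smooth change of coordinates on a neighborhood in $\mathbf{J}$, push the smooth density $\mathbf{d}_{\mathbf{A}|H}(\cdot,h)$ forward in these coordinates (it stays smooth, with smooth Jacobian factor), integrate out the complementary variables over a compact range to get a smooth function of the first coordinate $c$, and finally patch these local smooth densities together with a partition of unity subordinate to a finite subcover of $\mathbf{J}$. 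Either way, once $f$ is shown to be a smooth submersion with $|\nabla f|$ bounded below and $\mathbf{J}$ compact, smoothness of $\mathbf{d}_{C|H}(\cdot,h)$ follows from standard pushforward-under-submersion arguments, uniformly in $h$ because all the bounds are uniform over the compact parameter ranges and $\mathbf{d}_{A_m|H}\in C^0(J_m\times I)$.
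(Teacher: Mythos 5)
Your first half coincides with the paper's: both arguments hinge on showing that (\ref{BalanceCond}) forces $\nabla f(\mathbf{a})\neq 0$ everywhere on $\mathbf{J}$, via the quotient rule and the antisymmetrization in $(h,\tilde h)$. (One small inaccuracy there: $\partial f/\partial(a_{m'})_n$ is not the integral in (\ref{BalanceCond}) times a positive constant; the double integral acquires the additional positive weight $\prod_{m\neq m'}\mathbf{d}_{A_m|H}(a_m,h)\,\mathbf{d}_{A_m|H}(a_m,\tilde h)$ inside it, so passing from (\ref{BalanceCond}) to nonvanishing of the derivative needs the extra positivity step the paper invokes, not just a rescaling.) After that you diverge: the paper never differentiates the level-set integral. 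It instead bounds the characteristic function $E(e^{-2\pi i C z}\mid H)$, integrates by parts $l$ times with the transport operator $T_f g=\nabla g\cdot\nabla f/|\nabla f|^2$ to obtain $O(|z|^{-l})$ decay for every $l$, and concludes smoothness of $\mathbf{d}_{C|H}(\cdot,h)$ by Fourier inversion. Your route is the direct coarea/pushforward-under-a-submersion computation.

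The genuine gap is the boundary issue you flag yourself, and it is not bookkeeping: it is exactly where the coarea argument breaks. The pushforward of a smooth, strictly positive density on a compact $\mathbf{J}$ \emph{with boundary} under a submersion is in general not smooth, because the portion of $f^{-1}(c)$ lying inside $\mathbf{J}$ changes non-smoothly in $c$ as the level sets sweep across edges and corners of $\partial\mathbf{J}$. The model computation is two features uniform on $[0,1]$ fused by a rule affinely equivalent to $a_1+a_2$: the pushforward is the triangular density, which fails to be $C^1$ at an \emph{interior} point of $K$. So neither the one-sided-derivative convention at the endpoints of $K$ nor your partition-of-unity variant (in a boundary chart the range of the complementary coordinates still depends non-smoothly on $c$) closes the hole; you would need an extra hypothesis such as $\mathbf{d}_{A_m|H}(\cdot,h)$ vanishing to infinite order on $\partial J_m$, or $\mathbf{J}$ without boundary, or a transversality analysis of $f^{-1}(c)\cap\partial\mathbf{J}$ that you do not supply. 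To be fair, the paper's repeated integration by parts also generates boundary terms on $\partial\mathbf{J}$ that its written proof does not address, so the same issue is latent in (\ref{StationaryPhase}); but in your version it is the load-bearing step rather than an omitted remainder term, so as written the proof does not go through.
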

The performance function $\mathbf{d}_{C|H}$ of the fusion rule is
a generalization of the classical ``confusion matrix'' that describes
the probabilities of correct and false classification in a binary,
decision-level setting \cite{Va96}. For every object $h\in I$, the
values along the diagonal $\mathbf{d}_{C|H}(h,h)$ are the likelihoods
of the fusion rule giving the correct result, with a delta function
$\mathbf{d}_{C|H}(c,h)=\delta(c-h)$ corresponding to an ideal classifier
(unattainable in a real situation). The condition (\ref{BalanceCond})
holds for simple examples such as those involving Gaussian statistics
(see Section \ref{SecExamples}) but its exact form is not central
to the result, as it can be replaced by a variety of weaker but more
complicated conditions under which the stationary phase argument in
the proof still holds. Under these conditions, Theorem \ref{ThmPerformance}
says that $\mathbf{d}_{C|H}$ is actually a function, instead of merely
a generalized function, and is meaningful at every point. The proof
of Theorem \ref{ThmPerformance} also provides a way to compute the
performance function, by finding
\begin{equation}
\mathbf{d}_{C|H}(c,h)=\int_{\mathbf{J}}\delta(c-f(\mathbf{a}))\left(\prod_{m=1}^{M}\mathbf{d}_{A_{m}|H}(a_{m},h)\right)d\mathbf{a}.\label{LevelCurve}
\end{equation}
This is effectively an integral over only the level sets of the fusion
rule $\{\mathbf{a}\in\mathbf{J}:c=f(\mathbf{a})\}$ in the joint feature
space. In practice, these level sets can be numerically approximated
from the fusion rule as long as either the fusion rule does not have
``flat regions'' where the gradient $\nabla f$ is identically zero,
or the decision space $K$ is discrete. This is usually the simplest
and most efficient approach to computing $\mathbf{d}_{C|H}$, although
various ``smoothed out'' versions of (\ref{LevelCurve}) can be
used instead, such as taking the Fourier transform of (\ref{LevelCurve})
and inverting it, as done in (\ref{Fourier}) in the proof.\\

The computation of the fusion rule (\ref{FusionRule}) itself is simple
and involves an integration over only the (small) object space $I$.
On the other hand, increasing the number of sensors will rapidly increase
the dimension of the integral (\ref{LevelCurve}) for the performance
function. In practice, each sensor generates only a moderate number
of statistics and the individual feature spaces $J_{m}$ are relatively
small (with dimension $N_{m}$ typically at most $10$ or $20$).
This results in the joint density $\mathbf{d}_{\mathbf{A}|H}(\mathbf{a},h)$
having a ``block diagonal'' dependence structure that allows the
calculation of $\mathbf{d}_{C|H}$ to remain computationally tractable.
We will describe a Monte Carlo-based approach to perform this calculation
efficiently in Section \ref{SecNumerical}.\\

We next extend Proposition \ref{ThmDetFusion} to more general decision
spaces $K$ that are not necessarily a single interval. This case
is important for many applications in which $I$ and $K$ are both
finite sets, the so-called binary decision and $M$-ary classification
fusion problems. The case where $I\not\subset K$ is also of interest
in situations where there are several threat substances of interest,
but we ultimately want to make a ``true'' or ``false'' decision
at the fusion center. The posterior expectation (\ref{FusionRule})
is no longer a feasible solution and cannot be applied to this situation,
but we still have the following result.
\begin{thm}
\label{ThmDiscFusion}If the decision space $K$ is a closed set but
otherwise unconstrained, then the Bayes optimal fusion rule is $f^{*}(\mathbf{A})=Qf(\mathbf{A})$,
where $Q$ is the quantization function defined for each $x\in\mathbb{R}$
by choosing any $x'$ from the set $\{\arg\min_{x'\in K}|x-x'|\}$.
This fusion rule is unique almost everywhere.
\end{thm}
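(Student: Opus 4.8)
The plan is to reduce Theorem \ref{ThmDiscFusion} to the pointwise minimization that drives the proof of Proposition \ref{ThmDetFusion}, with the single modification that the decision value now ranges over the closed set $K$ instead of an interval containing $I$. Imposing the deterministic constraint (\ref{Cond3}) lets us write $C=g(\mathbf{A})$ for a measurable $g:\mathbf{J}\to K$, and since $C$ depends on $H$ only through $\mathbf{A}$, the Bayes risk decomposes as
\[
E(\mathrm{cost}(C,H))=\int_{\mathbf{J}}\left(\int_{I}\tfrac{1}{2}\bigl(g(\mathbf{a})-h\bigr)^{2}\,\mathbf{d}_{H|\mathbf{A}}(h,\mathbf{a})\,dh\right)\mathbf{d}_{\mathbf{A}}(\mathbf{a})\,d\mathbf{a}.
\]
The outer integrand is nonnegative and $\mathbf{d}_{\mathbf{A}}\geq0$, so minimizing over admissible $g$ is the same as minimizing, for $\mathbf{d}_{\mathbf{A}}$-almost every $\mathbf{a}$, the bracketed expression over the single value $c=g(\mathbf{a})\in K$.

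For a fixed $\mathbf{a}$, expanding the square gives $\int_{I}\tfrac12(c-h)^{2}\,\mathbf{d}_{H|\mathbf{A}}(h,\mathbf{a})\,dh=\tfrac12 c^{2}-c\,E(H\mid\mathbf{a})+\tfrac12 E(H^{2}\mid\mathbf{a})$, which is finite for a.e.\ $\mathbf{a}$ since $E(H^{2})<\infty$, and completing the square rewrites it as $\tfrac12\bigl(c-f(\mathbf{a})\bigr)^{2}$ plus a term independent of $c$, where $f(\mathbf{a})=E(H\mid\mathbf{a})$ is exactly the posterior expectation (\ref{FusionRule}). As $\tfrac12(c-f(\mathbf{a}))^{2}$ is strictly increasing in $|c-f(\mathbf{a})|$ and $K$ is closed (so the infimum is attained), the minimizers over $c\in K$ are precisely the nearest points of $K$ to $f(\mathbf{a})$; choosing one of them yields $c=Qf(\mathbf{a})$, and hence the optimal deterministic rule is $f^{*}(\mathbf{A})=Qf(\mathbf{A})$.

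It remains to confirm that $f^{*}$ is an admissible rule and to pin down uniqueness. Under the hypotheses of Proposition \ref{ThmDetFusion} the denominator in (\ref{FusionRule}) is everywhere positive, so $f$ is continuous, hence measurable; writing $\mathbb{R}\setminus K$ as a countable union of open intervals, $Q$ is single-valued off the countable set $S$ of midpoints of the bounded ones and is Borel on $\mathbb{R}\setminus S$, so $f^{*}=Qf$ is a genuine deterministic fusion rule. It is Bayes optimal by the pointwise argument, and any other optimal rule must, for $\mathbf{d}_{\mathbf{A}}$-a.e.\ $\mathbf{a}$, take a value that is a nearest point of $K$ to $f(\mathbf{a})$, so it agrees with $Qf$ except on $f^{-1}(S)$ --- a set that is $\mathbf{d}_{\mathbf{A}}$-null provided $f$ does not take any value in $S$ on a set of positive measure.

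Given Proposition \ref{ThmDetFusion}, the substantive content is merely the replacement of the unconstrained vertex of a parabola by its metric projection onto $K$, so the core of the argument is short. The one place needing genuine care is the final step: verifying that $Qf$ is measurable and that the projection's ambiguity set pulls back to a null set, so that ``unique almost everywhere'' is literally justified. That is where I would concentrate the effort --- analyzing the countable bisector set $S$ and recording (or, if necessary, adding as a mild nondegeneracy hypothesis) the fact that $\mathbf{d}_{\mathbf{A}}(f^{-1}(S))=0$.
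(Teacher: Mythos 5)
Your proposal is correct and follows essentially the same route as the paper: your completed square $\tfrac{1}{2}\bigl(c-f(\mathbf{a})\bigr)^{2}$ plus a $c$-independent remainder is exactly the paper's three-term expansion (\ref{ThreeTerms}) with the cross term annihilated because $f$ is the posterior mean, after which both arguments reduce to the pointwise nearest-point projection of $f(\mathbf{a})$ onto the closed set $K$. Your closing remark about the bisector set $S$ is well taken --- the paper's uniqueness step silently assumes that $f^{-1}(S)$ is null, so your explicit identification of that as the needed nondegeneracy condition is, if anything, more careful than the published argument.
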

The modified fusion rule $f^{*}$ is a generalization of well known
formulas for binary (two element) spaces $I$, $J_{m}$ and $K$ \cite{KZG92,Va96},
and can be computed easily in practice. For example, if $I=K=\{0,1\}$,
then the fusion rule $f$ given by (\ref{FusionRule}) may generally
take on any value in $[0,1]$ and is not feasible, but we can simply
round it to the nearest integer to obtain the actual, optimal fusion
rule $f^{*}$ for our scenario. Note that in the proof of Theorem
\ref{ThmDiscFusion}, the quadratic cost function is crucial and allows
for the cancellation of the third term in (\ref{ThreeTerms}). The
result of Theorem \ref{ThmDiscFusion} will usually not hold for other
costs.\\

We now consider cost functions more general than the quadratic one.
Optimization problems in Bayesian statistics under arbitrary cost
functions usually have no closed-form solutions, and the fusion rule
would in most cases have to be determined numerically (except for
some very specific densities, such as in \cite{Ye94}). However, we
identify a class of prior and sensor densities for which the same
fusion rule (\ref{FusionRule}) turns out to be Bayes optimal for
a much larger class of costs.
\begin{thm}
\label{ThmLpFusion}Suppose that $H$ is sub-Gaussian (i.e., $E(e^{rH{}^{2}})<\infty$
for some $r>0$) and for each $\mathbf{a}\in\mathbf{J}$, whenever
$E(e^{-2\pi izH}|\mathbf{A}=\mathbf{a})=0$ for some $z\in\mathbb{C}$,
$E(e^{2\pi izH}|\mathbf{A}=\mathbf{a})=0$ as well. Let $K$ be a
single interval (possibly unbounded). Then the fusion rule (\ref{FusionRule})
is Bayes optimal for $\mathrm{cost}(c,h)=W(c-h)$, where $W$ is any
entire, even, nonnegative and convex function with $\alpha_{1}(\left|x\right|^{p}-1)\leq\left|W(x)\right|\leq\alpha_{2}(\left|x\right|^{p}+1)$
for some constants \textup{$\alpha_{1},\alpha_{2}>0$} and $p\geq1$.
\end{thm}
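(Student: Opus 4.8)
The plan is to reduce the general-cost optimality of $f$ in Theorem~\ref{ThmLpFusion} to the quadratic case already handled in Proposition~\ref{ThmDetFusion}, by showing that the deterministic constraint forces any competitor to agree with $f$ almost everywhere. First I would recall from the proof of Proposition~\ref{ThmDetFusion} that a deterministic fusion rule is one for which $\mathbf{d}_{C|\mathbf{A}}(c,\mathbf{a})=\delta(c-g(\mathbf{a}))$ for some measurable $g:\mathbf{J}\to K$; the Bayes risk for the cost $W(c-h)$ then becomes $E\big(W(g(\mathbf{A})-H)\big)=\int_{\mathbf{J}} \Phi_{\mathbf{a}}(g(\mathbf{a}))\,\mathbf{d}_{\mathbf{A}}(\mathbf{a})\,d\mathbf{a}$, where $\Phi_{\mathbf{a}}(t):=E\big(W(t-H)\mid \mathbf{A}=\mathbf{a}\big)$. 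So minimizing the risk amounts to minimizing $\Phi_{\mathbf{a}}(t)$ over $t\in K$ pointwise in $\mathbf{a}$, and the claim is exactly that $t=f(\mathbf{a})$ is the (a.e.\ unique) minimizer of $\Phi_{\mathbf{a}}$. Note $\Phi_{\mathbf{a}}$ is finite for all $t$: since $W$ is even and convex with $|W(x)|\le \alpha_2(|x|^p+1)$ and $H$ is sub-Gaussian, all moments of $H$ are finite, so $E(W(t-H)\mid\mathbf{A}=\mathbf{a})<\infty$.

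The heart of the argument is a symmetry/convexity observation about the conditional law of $H$ given $\mathbf{A}=\mathbf{a}$. I would show that this conditional law is symmetric about its mean $f(\mathbf{a})$. The hypothesis involving the conditional characteristic function $\psi(z):=E(e^{-2\pi i zH}\mid\mathbf{A}=\mathbf{a})$ is what delivers this: $\psi$ extends to an entire function of $z\in\mathbb{C}$ (again by sub-Gaussianity, which gives the needed growth control on $E(e^{-2\pi i z H}\mid \mathbf{A}=\mathbf{a})$ — one checks the conditional density inherits a sub-Gaussian tail because the sensor densities are bounded and positive), and the stated condition ``$\psi(z)=0 \Rightarrow \overline{\psi}(\bar z)=\psi(-z)=0$'' says the zero set of $\psi$ is symmetric under $z\mapsto -z$. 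Combined with evenness/conjugation properties of characteristic functions, a Hadamard-factorization argument for the order-$\le 2$ entire function $\psi$ then forces $\psi(z)=e^{-2\pi i z f(\mathbf{a})}\,\chi(z)$ with $\chi$ real and even, i.e.\ the centered conditional law $H-f(\mathbf{a})\mid \mathbf{A}=\mathbf{a}$ has a real, even characteristic function and is therefore symmetric about $0$. Once symmetry is in hand, the minimization is classical: for a random variable $Z$ symmetric about $0$ and any even convex $W$, the function $t\mapsto E(W(t-Z))$ is itself even (after recentering) and convex, hence minimized at $t=0$ — this is a one-line Jensen/symmetrization argument, $E(W(t-Z))=\tfrac12 E\big(W(t-Z)+W(t+Z)\big)\ge E(W(t))$ by convexity and the symmetry of $Z$. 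Applying this with $Z=H-f(\mathbf{a})$ gives $\arg\min_{t}\Phi_{\mathbf{a}}(t)=f(\mathbf{a})$. Since $I\subset K$ and $f(\mathbf{a})=E(H\mid\mathbf{A}=\mathbf{a})$ lies in the closed convex hull of $I$, which sits inside the interval $K$, the unconstrained minimizer is feasible, so it is also the minimizer over $K$. Uniqueness a.e.\ follows from strict-type convexity of $\Phi_{\mathbf{a}}$ away from the minimum (or, where $W$ is not strictly convex, by the same a.e.\ argument used in Proposition~\ref{ThmDetFusion}, since the set of $\mathbf{a}$ where $\Phi_{\mathbf{a}}$ has a flat minimum has measure zero under the nondegeneracy built into the hypotheses).

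I expect the main obstacle to be the complex-analytic step: rigorously passing from the zero-set symmetry hypothesis on the conditional characteristic function to the conclusion that the centered conditional law is symmetric. This requires (i) justifying that $\psi$ is entire of finite order — which should follow from the sub-Gaussian tail of the conditional density, itself a consequence of $\mathbf{d}_{H}$ being sub-Gaussian together with $\mathbf{d}_{A_m|H}$ being continuous, bounded and bounded below on the compact set $\mathbf{J}$, so that the likelihood factor is bounded above and below and does not spoil the Gaussian-type decay; and (ii) invoking Hadamard's factorization theorem and matching the genus-$\le 1$ factors, using the general fact that $\overline{\psi(\bar z)}=E(e^{2\pi i z H}\mid\mathbf{A}=\mathbf{a})$ is the characteristic function of $-(H\mid\mathbf{A}=\mathbf{a})$, to peel off the linear phase $e^{-2\pi i z f(\mathbf{a})}$ and conclude the remaining factor is real and even. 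Everything else — the reduction to pointwise minimization, the finiteness of $\Phi_{\mathbf{a}}$, and the symmetrization inequality for even convex $W$ — is routine given the hypotheses and the machinery already set up in the proof of Proposition~\ref{ThmDetFusion}.
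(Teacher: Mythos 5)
Your proposal is correct in substance and shares its complex-analytic core with the paper's proof: both arguments show that $\widehat{G}(z)=\int e^{-2\pi izh}G(\mathbf{a},h)\,dh$ is entire of order at most $2$ (via sub-Gaussianity) and then use Hadamard factorization together with the symmetric-zeros hypothesis to conclude that $e^{2\pi if(\mathbf{a})z}\widehat{G}(z)$ is an even function of $z$. Where you genuinely diverge is in how that evenness is exploited. The paper expands $W'$ in its Taylor series (this is the step that uses entirety of $W$), writes the Euler--Lagrange equation as a sum of odd derivatives of $e^{2\pi ifz}\widehat{G}(z)$ at $z=0$, observes that each term vanishes, and then needs a separate direct-method argument (coercivity plus weak lower semicontinuity) to upgrade the stationary point to a minimum. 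You instead read the evenness as the probabilistic statement that the conditional law of $H-f(\mathbf{a})$ given $\mathbf{A}=\mathbf{a}$ is symmetric about $0$, reduce the Bayes risk to pointwise minimization of $\Phi_{\mathbf{a}}(t)=E(W(t-H)\mid\mathbf{A}=\mathbf{a})$ over $t\in K$, and finish with the classical fact that for symmetric noise and an even convex loss the center of symmetry is a minimizer. Your back end is more elementary (no Euler--Lagrange equation, no direct method) and in fact proves slightly more, since once symmetry of the conditional law is established you never use that $W$ is entire---only evenness, convexity and the growth bound (for finiteness of $\Phi_{\mathbf{a}}$) enter.

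Two small repairs. First, the displayed inequality $E(W(t-Z))=\tfrac12E(W(t-Z)+W(t+Z))\geq E(W(t))$ compares $\Phi_{\mathbf{a}}(t)$ to $W(t)$ rather than to $\Phi_{\mathbf{a}}(0)$, so it does not by itself establish the minimization; the correct one-liner is the one you also state, namely that $t\mapsto E(W(t-Z))$ is even and convex, whence $\Phi_{\mathbf{a}}(t)=\tfrac12\left(\Phi_{\mathbf{a}}(t)+\Phi_{\mathbf{a}}(-t)\right)\geq\Phi_{\mathbf{a}}(0)$. Second, the theorem does not assert almost-everywhere uniqueness, and the paper explicitly remarks that the optimal rule is generally non-unique for non-quadratic costs (e.g.\ when $W$ is flat near its minimum); you should drop that closing claim rather than attempt to prove it.
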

The conditions of Theorem \ref{ThmLpFusion} are satisfied, for example,
when the object and sensor statistics are all Gaussian, a case that
is discussed in more detail in Section \ref{SecExamples}. Many other
scenarios where Theorem \ref{ThmLpFusion} holds can also be found,
including cases with finitely supported priors $H$ and other types
of sensor distributions. For example, a prior of the form $\mathbf{d}_{H}(h)=\frac{1}{2M}\sum_{m=1}^{M}(\delta(h+m)+\delta(h-m))$,
corresponding to $I=\{m:m\in\mathbb{Z},1\leq\left|m\right|\leq M\}$,
together with Levy distributed sensor observations $\mathbf{d}_{A_{m}|H}(a_{m},h)=\sqrt{\frac{\left|h\right|}{2\pi a_{m}^{3}}}e^{-\frac{1}{2a_{m}}\left|h\right|}$
can be shown to result in $\widehat{G}(z)$ satisfying the ``symmetric
zeros'' condition of Theorem \ref{ThmLpFusion}.\\

The class of cost functions $W$ addressed by Theorem \ref{ThmLpFusion}
covers a wide variety of interesting cases. It includes functions
of the form $W(x)=\frac{1}{p}x^{p}$ with even exponents $p$, as
well as functions that asymptotically behave like $\left|x\right|^{p}$
for any real $p\geq1$ (e.g., $W(x)=\int_{-\infty}^{\infty}|x-y|^{p}e^{-y^{2}}dy$).
For such cost functions, larger values of $p$ give greater weight
to cases where the object $H$ is something particularly hard to detect
(such as an explosive compound hidden inside an everyday object such
as a mobile phone). Theorem \ref{ThmLpFusion} essentially says that
when detecting such objects is a high priority, (\ref{FusionRule})
is still the best way to reach a decision about them. Note that the
$p=1$ case with $W(x)=|x|$ is not covered, and the optimal decision
rule in this case is the posterior median instead of the posterior
mean (\ref{FusionRule}). The statements on the performance $\mathbf{d}_{C|H}$
from Theorem \ref{ThmPerformance} continue to hold. However, the
Bayes optimal fusion rule is usually no longer unique for non-quadratic
costs, and (\ref{FusionRule}) is one of several possible choices.\\

We finally examine the asymptotic properties of the fusion rule (\ref{FusionRule})
as the number of sensors $M$ increases, with every $N_{m}=1$ to
simplify the notation. For fusion scenarios that are in the class
covered by Theorem \ref{ThmLpFusion}, we can prove much stronger
asymptotic statements that cover not only the Bayes risk but also
the fused classification performance for every possible object.
\begin{thm}
\label{ThmAsymptotic}Let each $J_{m}=\mathbb{R}$ and let $A_{m}$
satisfy $E(A_{m}|H)=H$ and $\mathrm{var}(A_{m}|H)<R$ for some constant
$R$. Suppose $\mathbf{d}_{H}>0$ on $I$ and $K$ is an interval
such that $[\min(h:h\in I)-\epsilon,\max(h:h\in I)+\epsilon]\subset K$
for some $\epsilon>0$. Then as $M\to\infty$, the Bayes risk of the
fusion rule goes to $0$. If in addition $I$ and $K$ are compact
and for each $M$, $\mathbf{A}$ and $H$ satisfy the conditions of
Theorem \ref{ThmLpFusion} and $\mathbf{d}_{C|H}(\cdot,h)$ is a bounded,
even function, then for every $h\in I$, $\mathrm{\mathbf{d}}_{C|H}(\cdot,h)\to\delta(\cdot-h)$
in the weak-$\star$ sense as $M\to\infty$.
\end{thm}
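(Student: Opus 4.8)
The plan is to handle the two assertions separately: the Bayes–risk statement by a second–moment comparison against an explicit estimator, and the performance statement by a conditional concentration argument built on it.

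For the first assertion I would compare the optimal rule (\ref{FusionRule}) against the clipped sample mean. Since the $A_m$ are conditionally independent given $H$ with $E(A_m\mid H)=H$ and $\operatorname{var}(A_m\mid H)<R$, the rule $\bar A=\frac1M\sum_{m=1}^M A_m$ satisfies $E(\bar A\mid H)=H$ and, for \emph{every} $h\in I$, $\operatorname{var}(\bar A\mid H=h)=\frac1{M^2}\sum_m\operatorname{var}(A_m\mid H=h)<R/M$. Because $K$ is an interval containing $I\ni h$, the projection $Q\bar A$ onto $K$ (the $Q$ of Theorem \ref{ThmDiscFusion}, here just a clip to the interval) is $1$–Lipschitz and moves $\bar A$ no further from $h$, so $E\big((Q\bar A-H)^2\big)\le E\big((\bar A-H)^2\big)<R/M$. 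Since $Q\bar A$ is a deterministic fusion rule into $K$, the optimality in Proposition \ref{ThmDetFusion} over such rules gives that the Bayes risk of (\ref{FusionRule}) is at most $\tfrac12E\big((Q\bar A-H)^2\big)<R/(2M)\to0$; the margin $\epsilon$ merely lets one drop the clip on the event $\{|\bar A-h|<\epsilon\}$, which has conditional probability $\ge1-R/(M\epsilon^2)$.

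For the performance statement, fix $h\in I$ and $\phi\in C^{0}(K)$. By the definition of weak–$\star$ convergence it suffices to show $\int_K\phi(c)\,\mathbf{d}_{C|H}(c,h)\,dc=E\big(\phi(f(\mathbf A))\mid H=h\big)\to\phi(h)$, and since $K$ is compact $\phi$ is bounded and uniformly continuous, so by bounded convergence this reduces to $f(\mathbf A)\to h$ in $\mathbf{d}_{\mathbf A\mid H=h}$–probability, i.e.\ to $\operatorname{var}(C\mid H=h)\to0$ once we know $E(C\mid H=h)=h$. That last equality is what the ``even'' hypothesis on $\mathbf{d}_{C|H}(\cdot,h)$ buys us (read as symmetry of the error $C-h$, which forces the conditional mean of $C$ to be $h$; one may also note the general self–consistency identity $E(H\mid C)=C$, valid because $C=f(\mathbf A)$ is $\sigma(\mathbf A)$–measurable and equals its own conditional expectation given $\sigma(C)$). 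With $E(C\mid H)=H$ we get $E\big(\operatorname{var}(C\mid H)\big)=E\big((C-H)^2\big)=2\cdot(\text{Bayes risk})<R/M\to0$, so $g_M(h):=\operatorname{var}(C\mid H=h)\to0$ in $L^1(\mathbf{d}_H)$.

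The only genuine difficulty, and the step I expect to cost the most work, is promoting this to \emph{every} $h\in I$ rather than $\mathbf{d}_H$–almost every $h$. The route I would take uses compactness: the $g_M$ are nonnegative and uniformly bounded on the compact interval $I$ (by $(\operatorname{diam}K)^2$, as $C\in K$), so if the family $\{g_M\}$ is equicontinuous, then Arzel\`a--Ascoli gives that every subsequence has a sub–subsequence converging uniformly to some continuous limit, which is simultaneously its $L^1(\mathbf{d}_H)$–limit, hence $0$ $\mathbf{d}_H$–a.e., hence identically $0$ since $\mathbf{d}_H>0$ on the interval and the limit is continuous; therefore $g_M\to0$ uniformly on $I$. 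Establishing that equicontinuity \emph{uniformly in the number of sensors} $M$ is the delicate point: writing $g_M(h)=\int_{\mathbf J}f(\mathbf a)^2\prod_m\mathbf{d}_{A_m|H}(a_m,h)\,d\mathbf a-h^2$ and differentiating in $h$ produces a factor $\sum_m\partial_h\mathbf{d}_{A_m|H}/\mathbf{d}_{A_m|H}$ that grows with $M$, so one must exploit the smoothing effect of the $\mathbf a$–integral together with the regularity and structural conditions of Theorems \ref{ThmPerformance} and \ref{ThmLpFusion} (which apply for each $M$). An alternative that avoids the uniform equicontinuity is to localize the prior to a shrinking neighborhood of $h$: the associated posterior mean lies within $r$ of $h$ by construction and still has vanishing Bayes risk, which, on sending $M\to\infty$ and then $r\to0$ and checking that the localized and original fusion rules agree with high conditional probability, pins the weak–$\star$ limit of $\mathbf{d}_{C|H}(\cdot,h)$ to $\delta(\cdot-h)$. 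Either way, once $\operatorname{var}(C\mid H=h)\to0$ for the given $h$, Chebyshev and bounded convergence complete the weak–$\star$ claim.
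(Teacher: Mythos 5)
Your first assertion is handled correctly and by essentially the same device as the paper: dominate the optimal rule by an explicit feasible estimator built from the sample mean. Your quantitative route (conditional variance bound $E((Q\bar A-H)^{2})<R/M$ via the $1$-Lipschitz clip onto the interval $K$) is in fact tidier than the paper's appeal to the law of large numbers plus feasibility for large $M$, since it needs no domination argument and works without compactness of $I$ and $K$. No complaint there.

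The second assertion is where the genuine gap lies, and you have located it yourself: your argument only yields $\mathrm{var}(C\mid H=h)\to0$ in $L^{1}(\mathbf{d}_{H})$, and neither of your proposed repairs (uniform-in-$M$ equicontinuity of $h\mapsto\mathrm{var}(C\mid H=h)$, or localization of the prior) is carried out --- the first is essentially as hard as the theorem itself and the second changes the fusion rule --- so as written the pointwise claim is not proved. More importantly, you have missed the role of the hypothesis that the conditions of Theorem \ref{ThmLpFusion} hold for each $M$: that theorem guarantees the \emph{same} rule (\ref{FusionRule}) is simultaneously Bayes optimal for every cost $W(c-h)=(c-h)^{2l}$, so its risk under each such cost is dominated by that of the sample-mean rule and tends to $W(0)=0$; this controls \emph{all} even conditional moments of $C-h$, not just the second. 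The paper then observes that the even polynomials in $x=c-h$ form an algebra separating points of $\Lambda=\{c-h:c\in K,\,h\in I,\,c-h\geq0\}$, invokes Stone--Weierstrass to get density in $C^{0}(\Lambda)$, and uses the evenness of $\mathbf{d}_{C|H}(\cdot,h)$ to pass from even test functions on $\Lambda$ to arbitrary ones --- note that evenness plays this symmetrization role, not the ``conditional unbiasedness'' role you assign it. In your proposal Theorem \ref{ThmLpFusion} is relegated to an unused regularity device, which is why you are forced back onto the quadratic cost alone and hence onto the unresolved $L^{1}$-versus-pointwise issue. (To be fair, the paper's own passage from the integrated risk $E(W(C-H))\to0$ to the statement for each fixed $h$ is itself terse, but the multi-cost Stone--Weierstrass mechanism is the essential content your argument would need to reproduce.)
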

The proof of Theorem \ref{ThmAsymptotic} shows that for a large number
of sensors $M$, the Bayes risk of the optimal fusion rule is bounded
above by the sample mean of the sensor outputs. The quadratic Bayes
risk of the optimal fusion rule decays at least as fast as (and possibly
faster than) the $O(\frac{1}{M})$ rate that the sample mean achieves,
but it is usually quite different for fixed and realistic values of
$M$. The conditions in Theorem \ref{ThmLpFusion} are the main things
that enable the pointwise-like convergence result $\mathbf{d}_{C|H}(\cdot,h)\to\delta(\cdot-h)$,
and the additional constraints of Theorem \ref{ThmAsymptotic} (such
as compact $I$ and $K$) simplify the proof but can be relaxed. If
the object and feature distributions are not in the class covered
by Theorem \ref{ThmLpFusion}, the performance $\mathbf{d}_{C|H}$
of $f$ may not necessarily approach ``perfect classification''
(a delta function) and is only guaranteed to do so on average, in
the sense of the Bayes risk under a quadratic cost going to zero.

\section{Example data fusion scenarios\label{SecExamples}}

In this section, we consider a series of examples applying the theory
from Section \ref{SecMain} to concrete data fusion and performance
analysis problems. The simplest situation is when the sensor and object
statistics are all Gaussian, and is one of only a few cases where
the performance function of the fusion rule can be calculated symbolically.
We study this case in detail.
\begin{prop}
\label{ThmGauss}Let $I=K=\mathbb{R}$ and $J=J_{m}=\mathbb{R}^{M/2}$
for some even $M$. Suppose the object $H$ is Gaussian with mean
$0$ and variance $1$. Suppose the sensors $A$ and $B$ observe
$H$ and respectively output collections of features $\{A_{m}\}_{1\leq m\leq M/2}$
and $\{B_{m}\}_{1\leq m\leq M/2}$, where each $A_{m}|H$ and $B_{m}|H$
is Gaussian with mean $uH$ and variance $v$ for some fixed parameters
$u$ and $v>0$. Then the optimal fusion rule under a quadratic cost
(\ref{FusionRule}) is
\begin{equation}
f((\mathbf{A},\mathbf{B}))=\frac{u}{Mu^{2}+v}\left(\sum_{m=1}^{M/2}A_{m}+\sum_{m=1}^{M/2}B_{m}\right),\label{GaussFusion}
\end{equation}
and its performance and Bayes risk are given by
\begin{eqnarray*}
\mathbf{d}_{C|H}(c,h) & = & \frac{Mu^{2}+v}{u\sqrt{2\pi Mv}}e^{-\frac{\left(Mu^{2}(c-h)+cv\right)^{2}}{2Mu^{2}v}}\\
E((C-H)^{2}) & = & \frac{v}{Mu^{2}+v}.
\end{eqnarray*}
Furthermore, (\ref{GaussFusion}) is optimal for all costs $W$ satisfying
the conditions of Theorem \ref{ThmLpFusion}.
\end{prop}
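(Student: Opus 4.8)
The plan is to proceed by direct computation, exploiting Gaussian conjugacy for the first three assertions and then verifying the hypotheses of Theorem~\ref{ThmLpFusion} for the last one. Throughout I will write $S=\sum_{m=1}^{M/2}A_{m}+\sum_{m=1}^{M/2}B_{m}$ for the sum of all $M$ sensor outputs, and I note at the outset that the hypotheses of Proposition~\ref{ThmDetFusion} are trivially met here ($\mathbf{d}_{H}$ is a smooth, strictly positive density with $E(H^{2})=1<\infty$, and each $\mathbf{d}_{A_{m}|H}$, $\mathbf{d}_{B_{m}|H}$ is continuous and strictly positive), so (\ref{FusionRule}) is indeed the optimal deterministic rule.

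First I would substitute $\mathbf{d}_{A_{m}|H}(a,h)=(2\pi v)^{-1/2}e^{-(a-uh)^{2}/(2v)}$ (and likewise for $B_{m}$) and $\mathbf{d}_{H}(h)=(2\pi)^{-1/2}e^{-h^{2}/2}$ into (\ref{FusionRule}). Since the $M$ sensors are conditionally independent given $H$, the exponent of the joint density is $-\tfrac{1}{2v}\big(\sum(A_{m}-uh)^{2}+\sum(B_{m}-uh)^{2}\big)-\tfrac{h^{2}}{2}$; expanding in $h$ turns this into $-\tfrac{1}{2}\tfrac{Mu^{2}+v}{v}h^{2}+\tfrac{uS}{v}h+(\text{terms independent of }h)$. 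Completing the square shows that the posterior of $H$ given $(\mathbf{A},\mathbf{B})$ is Gaussian with mean $\tfrac{uS}{Mu^{2}+v}$ and variance $\tfrac{v}{Mu^{2}+v}$; the numerator and denominator integrals in (\ref{FusionRule}) then evaluate to this posterior mean, giving (\ref{GaussFusion}).

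Next, for $\mathbf{d}_{C|H}$ I would observe that conditionally on $H=h$ the total $S$ is a sum of $M$ independent $N(uh,v)$ variables, hence $S\mid H=h\sim N(Muh,Mv)$, so $C=\tfrac{u}{Mu^{2}+v}S$ is, conditionally on $H=h$, Gaussian with mean $\tfrac{Mu^{2}h}{Mu^{2}+v}$ and variance $\tfrac{u^{2}Mv}{(Mu^{2}+v)^{2}}$. Writing out this density and using $c-\tfrac{Mu^{2}h}{Mu^{2}+v}=\tfrac{Mu^{2}(c-h)+cv}{Mu^{2}+v}$ yields the stated formula. For the Bayes risk, the cleanest argument is that under quadratic cost the Bayes risk of the posterior-mean rule equals $E[\mathrm{var}(H\mid\mathbf{A},\mathbf{B})]$, and since the posterior variance found above is the constant $\tfrac{v}{Mu^{2}+v}$, the Bayes risk is exactly $\tfrac{v}{Mu^{2}+v}$; alternatively one expands $E((C-H)^{2})$ directly using $E(H)=0$, $E(H^{2})=1$, $E(A_{m}H)=u$ and $\mathrm{var}(S)=Mv+M^{2}u^{2}$.

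Finally, for optimality under every admissible $W$ I would check the hypotheses of Theorem~\ref{ThmLpFusion}. Since $H\sim N(0,1)$ we have $E(e^{rH^{2}})<\infty$ for $0<r<1/2$, so $H$ is sub-Gaussian, and $K=\mathbb{R}$ is a single interval. The ``symmetric zeros'' condition is vacuous: the posterior $H\mid\mathbf{A}=\mathbf{a}$ is Gaussian, so $E(e^{-2\pi izH}\mid\mathbf{A}=\mathbf{a})=\exp\!\big(-2\pi iz\,\mu_{\mathrm{post}}(\mathbf{a})-2\pi^{2}\sigma_{\mathrm{post}}^{2}z^{2}\big)$ is the exponential of an entire function and therefore never vanishes on $\mathbb{C}$, so the required implication holds with an empty hypothesis. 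Theorem~\ref{ThmLpFusion} then gives the last claim. I do not expect a genuine obstacle here; the only points needing care are keeping the bookkeeping of $M$ versus $M/2$ consistent through the completion of the square, and recognizing that the zero-set hypothesis of Theorem~\ref{ThmLpFusion} is automatically satisfied because a Gaussian characteristic function has no complex zeros.
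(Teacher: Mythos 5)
Your proposal is correct, and the overall strategy (direct Gaussian computation for the rule, then verification of the hypotheses of Theorem \ref{ThmLpFusion}) matches the paper's; the derivation of (\ref{GaussFusion}) via completing the square in $h$ is the same calculation the paper does, just phrased as conjugacy. Where you genuinely diverge is in the two performance computations. For $\mathbf{d}_{C|H}$ the paper deliberately exercises the Fourier-transform machinery of Theorem \ref{ThmPerformance}: it computes $E(e^{-2\pi izC}\mid H)$ as a product of $M$ one-dimensional Gaussian integrals and then inverts. You instead observe that $C$ is a fixed linear function of $S$ with $S\mid H=h\sim N(Muh,Mv)$ and read off the conditional law directly; this is more elementary and equivalent here (your identity $c-\tfrac{Mu^{2}h}{Mu^{2}+v}=\tfrac{Mu^{2}(c-h)+cv}{Mu^{2}+v}$ and the variance bookkeeping check out), though it exploits the linearity of the rule rather than illustrating the general method. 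For the Bayes risk the paper integrates $(c-h)^{2}\mathbf{d}_{C|H}(c,h)$ against the prior; your appeal to $E((C-H)^{2})=E[\mathrm{var}(H\mid\mathbf{A},\mathbf{B})]$ with constant posterior variance $\tfrac{v}{Mu^{2}+v}$ is cleaner and avoids that integral entirely. Your handling of the final claim coincides with the paper's: both reduce to the observation that the posterior characteristic function is the exponential of an entire function and hence zero-free, so the symmetric-zeros condition is vacuous. The only cosmetic point is that the stated normalization $\tfrac{Mu^{2}+v}{u\sqrt{2\pi Mv}}$ tacitly assumes $u>0$ (it should carry $|u|$ otherwise), but that is an artifact of the proposition's statement, not of your argument.
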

The situation described by Proposition \ref{ThmGauss} can be interpreted
in the following way. The object space represents a degree of belief
between the certain presence ($H=\infty$) and certain absence $(H=-\infty)$
of an object of interest. Each of the features picked up by the sensors
$A$ and $B$ provide some information on $H$, but with a known level
of uncertainty $v$. The fusion rule (\ref{GaussFusion}) combines
the features in a way that best matches the resulting (soft) decision
with the original belief, using the available information from the
sensors. The division of the $M$ features into two sensors $A$ and
$B$ is arbitrary and they can equivalently be combined into a single
sensor, but this formulation is useful in establishing Corollary \ref{CorGaussSep}
below.\\

Proposition \ref{ThmGauss} shows that the average performance of
the fusion rule (as measured by the quadratic Bayes risk) is roughly
inversely proportional to the number of features being fused. Note
that the optimal fusion rule (\ref{GaussFusion}) is similar to but
different from the sample mean of all the features $A_{m}$ and $B_{m}$,
and that its performance is skewed by the prior on $H$ (see Figure
\ref{FigExGauss}). For a large number of features, it is easy to
verify that as $M\to\infty$, the fusion rule (\ref{GaussFusion})
approaches the actual sample mean of the $A_{m}$ and $B_{m}$ and
the performance satisfies $\mathbf{d}_{C|H}(\cdot,h)\to\delta(\cdot-h)$
in the weak-$\star$ sense for each $h$, in line with Theorem \ref{ThmAsymptotic}.\\

\begin{figure}[h]
\centering{}\includegraphics[trim=0.2in 0in 0.5in 0in, clip=true, scale=0.6]{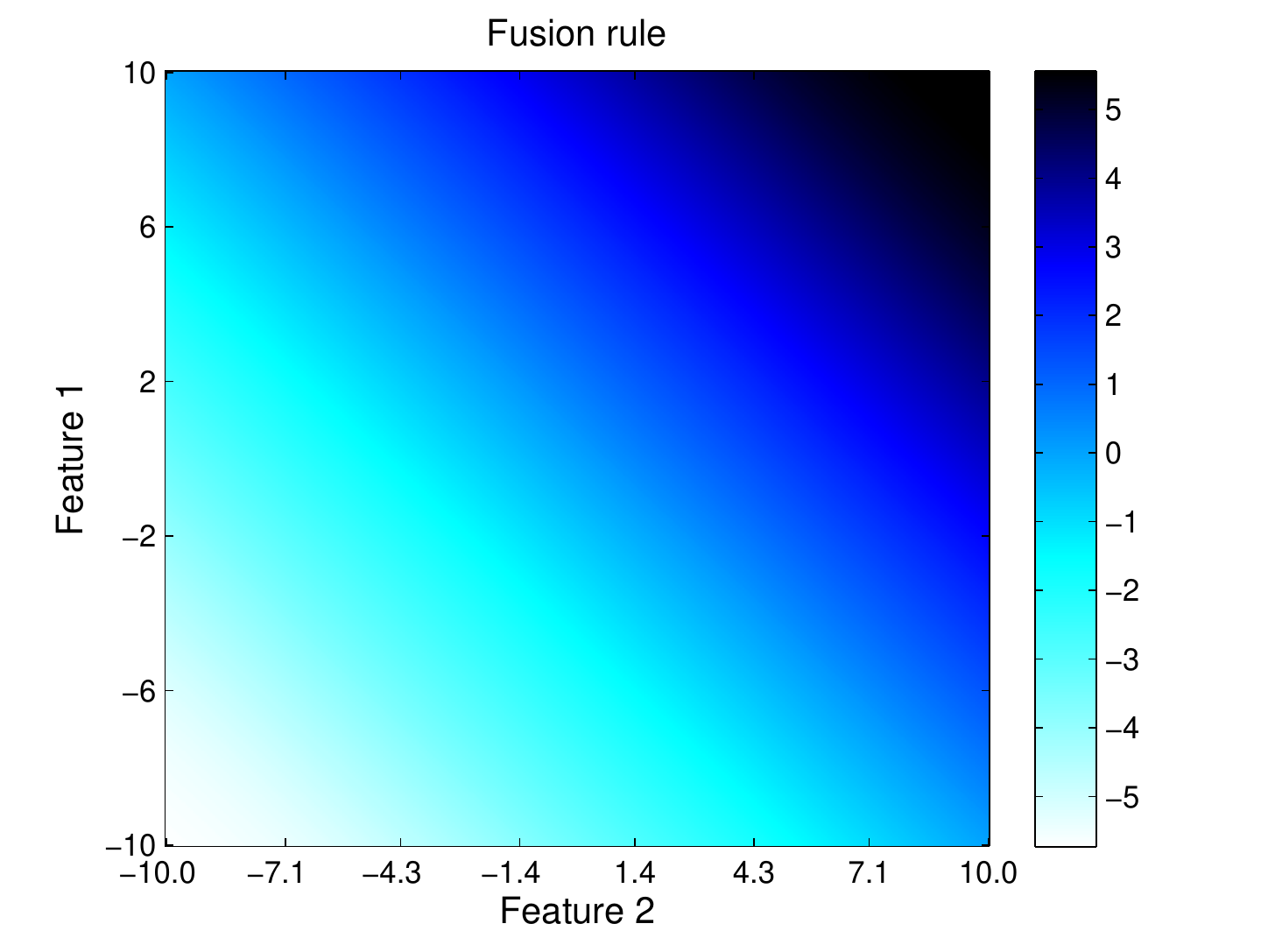}\includegraphics[trim=0.2in 0in 0.5in 0in, clip=true, scale=0.6]{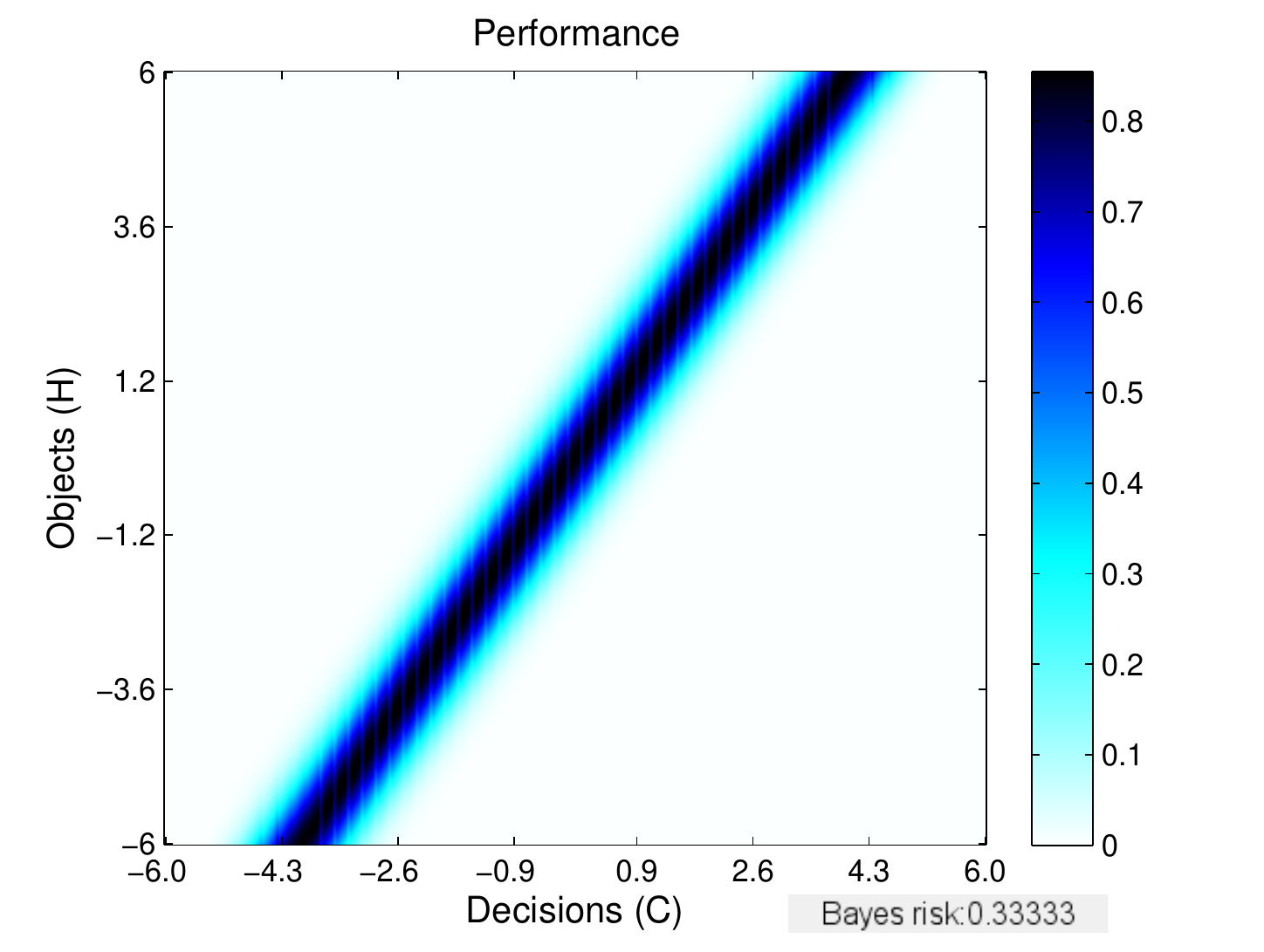}\protect\caption{\label{FigExGauss}Fusion configuration from Proposition \ref{ThmGauss}
with $u=v=1$ and $M=2$.}
\end{figure}

Proposition \ref{ThmGauss} also allows us to compare the performance
of directly fusing all features for the entire system, as opposed
to having each sensor combine its own features internally according
to (\ref{GaussFusion}) and then fusing the resulting sensor outputs
using (\ref{GaussFusion}) again (see Figure \ref{FigDistFusion}
(a-b)). The latter corresponds to a typical approach taken in many
real-world sensor fusion systems and is a version of distributed fusion
with person-by-person optimization (PBPO), where the decision rule
at each fusion center is chosen using only the properties of its own
inputs and outputs, without regard to the rest of the graphical model
(see \cite{BKP97,Du07,Va96,VV97}). Distributed fusion networks are
common in many applications with large numbers of sensors due to bandwidth
or other communication constraints between the sensors. This typically
incurs a loss in the overall system performance, but in the special
case when the statistics are all Gaussian, the performance is unaffected.
\begin{cor}
\label{CorGaussSep}Let $I$, $J$, $K$, $H$, $A$ and $B$ be as
given in Proposition \ref{ThmGauss}, with $K^{\star}=K$ and $u=v=1$,
so the Bayes risk of the fusion configuration in Proposition \ref{ThmGauss}
is $\frac{1}{M+1}$. Now suppose $A$ and $B$ each have internal
fusion centers that respectively reduce $\{A_{m}\}_{1\leq m\leq M/2}\in J$
to a decision $A^{\star}\in K^{\star}$ and $\{B_{m}\}_{1\leq m\leq M/2}\in J$
to a decision $B^{\star}\in K^{\star}$ using locally Bayes optimal
decisions. Let $A^{\star}$ and $B^{\star}$ be combined at the system
fusion center $C$ to produce a locally optimal output in $K$, as
in Figure \ref{FigDistFusion} (b). Then the Bayes risk of the entire
fusion system is still $\frac{1}{M+1}.$
\end{cor}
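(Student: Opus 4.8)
The plan is to reduce the two-stage distributed configuration to a single application of Proposition~\ref{ThmGauss}, by computing the internal fusion rules of $A$ and $B$ explicitly and then tracking how they transform the conditional statistics seen by $C$.

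First I would identify the locally Bayes optimal rule at sensor $A$'s internal fusion center. That sub-problem is itself a fusion problem with object $H\sim N(0,1)$, the $M/2$ conditionally independent features $\{A_m\}$ each distributed as $N(H,1)$ (since $u=v=1$), a single-interval decision space $K^\star=\mathbb{R}$, and a quadratic cost inherited from the overall problem; so by Proposition~\ref{ThmDetFusion} the locally optimal rule is the posterior mean
\[
A^\star=E(H|\mathbf{A})=\frac{2}{M+2}\sum_{m=1}^{M/2}A_m,
\]
and by symmetry $B^\star=\frac{2}{M+2}\sum_{m=1}^{M/2}B_m$ (this is just the $(M/2)$-feature, $u=v=1$ instance of (\ref{GaussFusion})).

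Next I would compute the conditional law of $(A^\star,B^\star)$ given $H$. Since $\sum_{m=1}^{M/2}A_m$ conditional on $H$ is $N(\frac{M}{2}H,\frac{M}{2})$, the rescaling above shows that $A^\star$ conditional on $H$ is $N(u'H,v')$ with $u'=\frac{M}{M+2}$ and $v'=\frac{2M}{(M+2)^2}$, and likewise for $B^\star$. Because $A^\star$ depends only on $\{A_m\}$ and $B^\star$ only on $\{B_m\}$, and all the $A_m,B_m$ are conditionally independent given $H$, the variables $A^\star$ and $B^\star$ are themselves conditionally independent given $H$. Hence the system fusion center $C$ again faces the situation of Proposition~\ref{ThmGauss}, now with two ``features'' $A^\star$ and $B^\star$ (one per side, so the total feature count is $2$), common slope $u'$, common variance $v'$ and the same Gaussian prior, and its locally optimal output is once more the posterior mean $C^\star=E(H|A^\star,B^\star)$.

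Finally I would invoke the Bayes-risk formula of Proposition~\ref{ThmGauss} for this reduced two-feature problem, obtaining
\[
E((C^\star-H)^2)=\frac{v'}{2(u')^2+v'}=\frac{2M/(M+2)^2}{2M(M+1)/(M+2)^2}=\frac{1}{M+1},
\]
which equals the risk $\frac{v}{Mu^2+v}=\frac{1}{M+1}$ of the fully centralized configuration. The only points needing genuine care are checking that the locally optimal rules at all three nodes really are posterior means --- this is where the single-interval decision spaces and the quadratic cost enter, via Proposition~\ref{ThmDetFusion} --- and verifying the conditional independence of $A^\star$ and $B^\star$ given $H$, which is what makes the second application of Proposition~\ref{ThmGauss} legitimate; the remaining manipulations are elementary Gaussian computations.
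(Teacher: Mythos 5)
Your proposal is correct and follows essentially the same route as the paper's proof: compute the internal posterior-mean rules $A^{\star}=\frac{2}{M+2}\sum A_{m}$, $B^{\star}=\frac{2}{M+2}\sum B_{m}$, observe that conditionally on $H$ they are independent Gaussians with slope $u'=\frac{M}{M+2}$ and variance $v'=\frac{2M}{(M+2)^{2}}$, and apply Proposition \ref{ThmGauss} again to the resulting two-feature problem to get Bayes risk $\frac{v'}{2(u')^{2}+v'}=\frac{1}{M+1}$. Your added remarks on why the local rules are posterior means and on the conditional independence of $A^{\star}$ and $B^{\star}$ merely make explicit steps the paper leaves implicit.
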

The result of Corollary \ref{CorGaussSep} is possible only because
of the simple form of the fusion rule (\ref{GaussFusion}). The Bayes
risk in a distributed fusion model like this will generally be lower
in other cases due to a loss of information at $A^{\star}$ and $B^{\star}$.
An example of this is discussed below.\\

Another example similar to Proposition \ref{ThmGauss} can be considered,
using other types of sensor statistics.
\begin{prop}
\label{ThmExpo} Let $I=K=J_{m}=[0,\infty)$. Suppose the object $H$
is exponentially distributed with rate parameter $1$ (i.e., $E(H)=1$),
and there are $M$ sensors $A_{m}$ with exponentially distributed
observations all having rate parameter $H$. Then the optimal fusion
rule is $f(\mathbf{A})=\frac{M+1}{\sum_{m=1}^{M}A_{m}+1}$. Its performance
and Bayes risk are given by
\begin{eqnarray*}
\mathbf{d}_{C|H}(c,h) & = & \frac{(M+1)h^{M+1}}{M!c^{2}}\left(\frac{M+1}{c}-1\right)^{M}e^{-h(\frac{M+1}{c}-1)},\quad c\in(0,M+1]\\
E((C-H)^{2}) & = & \frac{2}{M+2}.
\end{eqnarray*}

\end{prop}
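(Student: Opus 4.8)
The plan is to carry out three explicit calculations, each of which collapses to a Gamma- or Beta-function integral: evaluate the fusion rule from (\ref{FusionRule}), read off the performance density $\mathbf{d}_{C|H}$ from (\ref{LevelCurve}), and then compute the quadratic Bayes risk. First I would record the densities $\mathbf{d}_H(h) = e^{-h}$ on $[0,\infty)$ and $\mathbf{d}_{A_m|H}(a_m,h) = h\,e^{-h a_m}$, so that the likelihood factor entering (\ref{FusionRule}) is $\left(\prod_{m=1}^M \mathbf{d}_{A_m|H}(A_m,h)\right)\mathbf{d}_H(h) = h^M e^{-hS}$ with $S := 1 + \sum_{m=1}^M A_m$. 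The numerator and denominator of (\ref{FusionRule}) are then $\int_0^\infty h^{M+1} e^{-hS}\,dh = (M+1)!\,S^{-(M+2)}$ and $\int_0^\infty h^{M} e^{-hS}\,dh = M!\,S^{-(M+1)}$, whose ratio is $f(\mathbf{A}) = (M+1)/S$, as claimed (and $f$ indeed takes values in $(0,M+1]\subset K$). I would also flag that the hypotheses of Proposition \ref{ThmDetFusion} are not literally satisfied here, since $\mathbf{d}_{A_m|H}$ vanishes at $h=0$, is unbounded on $J_m\times I$, and $I,K$ are noncompact; I would close this gap by truncating $I$ and the $J_m$ to compact sets and passing to the limit, using $E(H^2)=2<\infty$ to keep all the integrals finite and uniformly controlled.

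For the performance density I would use that $f$ depends on $\mathbf{A}$ only through the sufficient statistic $T := \sum_{m=1}^M A_m$, which conditionally on $H=h$ is $\mathrm{Gamma}(M,h)$ with density $\frac{h^M}{(M-1)!}\,t^{M-1} e^{-ht}$. Since $c = f = (M+1)/(1+t)$ is a smooth strictly decreasing bijection of $[0,\infty)$ onto $(0,M+1]$ with inverse $t = (M+1)/c - 1$, I would obtain $\mathbf{d}_{C|H}(\cdot,h)$ by the one-dimensional change-of-variables formula, or equivalently by specializing (\ref{LevelCurve}): reduce the $M$-fold integral over $\mathbf{J}$ to a one-dimensional one using the coarea pushforward of Lebesgue measure (which has density $t^{M-1}/(M-1)!$ along $\{T=t\}$), then resolve the delta function via $\delta\!\left(c - \frac{M+1}{1+t}\right) = \frac{c^2}{M+1}\,\delta\!\left(t - \left(\frac{M+1}{c}-1\right)\right)$. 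This produces the stated closed form, supported on $c\in(0,M+1]$; smoothness of $\mathbf{d}_{C|H}(\cdot,h)$ on the interior is then immediate from the formula itself, in the spirit of Theorem \ref{ThmPerformance} (whose balance-type condition (\ref{BalanceCond}) is in any case straightforward to check for this one-parameter exponential family).

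Finally, rather than integrate $(c-h)^2$ against $\mathbf{d}_{C|H}(c,h)\mathbf{d}_H(h)$ directly, I would compute the Bayes risk using that $f = E(H\mid\mathbf{A})$ is the posterior mean, so $E((C-H)^2) = E\big(\mathrm{var}(H\mid\mathbf{A})\big)$. The posterior of $H$ given $\mathbf{A}$ is $\mathrm{Gamma}(M+1,S)$, hence $\mathrm{var}(H\mid\mathbf{A}) = (M+1)/S^2$; averaging this over the marginal density $M!\,S^{-(M+1)}$ of $\mathbf{A}$ and again collapsing onto $T$ via the coarea factor reduces the problem to the Beta integral $\int_0^\infty t^{M-1}(1+t)^{-(M+3)}\,dt = B(M,3) = \frac{2(M-1)!}{(M+2)!}$, which gives $E((C-H)^2) = 2/(M+2)$. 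The argument is elementary throughout; the only points requiring real care are the measure-theoretic bookkeeping when collapsing the $M$-dimensional integrals in (\ref{FusionRule}) and (\ref{LevelCurve}) onto the one-dimensional sufficient statistic $T$ (the coarea factor $t^{M-1}/(M-1)!$ must be tracked exactly), together with the minor technical patch to the hypotheses of Proposition \ref{ThmDetFusion} noted above --- I do not anticipate any conceptual obstacle.
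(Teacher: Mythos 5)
Your reduction to the sufficient statistic $T=\sum_{m=1}^{M}A_{m}$ is the right idea for the performance density, but you did not actually carry the computation through, and the two places where you gesture at it both go wrong. First, your delta-function identity has the Jacobian inverted: since $\frac{d}{dt}\frac{M+1}{1+t}=-\frac{M+1}{(1+t)^{2}}$ has absolute value $\frac{c^{2}}{M+1}$ at the root $t=\frac{M+1}{c}-1$, the correct resolution is $\delta\bigl(c-\frac{M+1}{1+t}\bigr)=\frac{M+1}{c^{2}}\,\delta\bigl(t-(\frac{M+1}{c}-1)\bigr)$, not $\frac{c^{2}}{M+1}\,\delta(\cdots)$. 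Second, and more seriously, your assertion that the computation ``produces the stated closed form'' is false. With $T\,|\,H=h\sim\mathrm{Gamma}(M,h)$ (density $\frac{h^{M}t^{M-1}}{(M-1)!}e^{-ht}$) and the correct Jacobian $\frac{M+1}{c^{2}}$, the change of variables gives
\[
\mathbf{d}_{C|H}(c,h)=\frac{(M+1)\,h^{M}}{(M-1)!\,c^{2}}\Bigl(\frac{M+1}{c}-1\Bigr)^{M-1}e^{-h(\frac{M+1}{c}-1)},
\]
which differs from the proposition's formula by a factor $\frac{h}{M}\bigl(\frac{M+1}{c}-1\bigr)$. Both expressions integrate to one in $c$, but only the one above is the law of $C=(M+1)/(1+T)$: for $M=1$, $h=1$ one has $P(C>1)=P(A_{1}<1)=1-e^{-1}$, which matches $\int_{1}^{2}\frac{2}{c^{2}}e^{-(2/c-1)}dc$ but not the proposition's $\int_{1}^{2}\frac{2}{c^{2}}(\frac{2}{c}-1)e^{-(2/c-1)}dc=1-2e^{-1}$. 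So your method, executed correctly, actually exposes an error in the stated performance formula (the paper's own level-set derivation makes the corresponding slip, using the full simplex volume $s^{M}/M!$ where the $(M-1)$-dimensional slice density $s^{M-1}/(M-1)!$ is needed and picking up an extra $h$ by differentiating the exponential in $c$). Declaring agreement with the target formula without doing the computation is precisely the step that fails here.

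The other two parts are sound. Your fusion-rule calculation is identical to the paper's Gamma-integral computation, and your Bayes-risk route via $E((C-H)^{2})=E(\mathrm{var}(H\mid\mathbf{A}))$ with posterior $\mathrm{Gamma}(M+1,S)$ and the Beta integral $B(M,3)=\frac{2(M-1)!}{(M+2)!}$ is cleaner than the paper's direct double integral and yields the correct value $\frac{2}{M+2}$. Your remarks on patching the hypotheses of Proposition \ref{ThmDetFusion} (positivity failing at $h=0$, unboundedness, noncompact $I$ and $J_{m}$) are reasonable but minor; the essential requirement, $\mathbf{d}_{\mathbf{A}}(\mathbf{a})=M!\,S^{-(M+1)}>0$, holds directly.
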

Proposition \ref{ThmExpo} illustrates a variety of different behavior
from the Gaussian cases. The fusion rule is now quite different from
the sample mean and only takes on values between $0$ and $M+1$,
even though the decision space is the entire positive real axis. This
indicates that any fusion rule that produces decisions $f(\mathbf{A})>M+1$
would compromise the good performance of the decisions for $0<f(\mathbf{A})\leq M+1$,
to the extent that the overall Bayes risk of the system increases.
The performance function is only meaningful for such values of $c$,
but if we define it to be identically zero for $c\geq M+1$, then
it still converges to a delta function in the weak-$\star$ sense
as $M\to\infty$.

\begin{figure}[h]
\centering{}\includegraphics[trim=0in 0.5in 0in 0.5in, clip=true, scale=0.35]{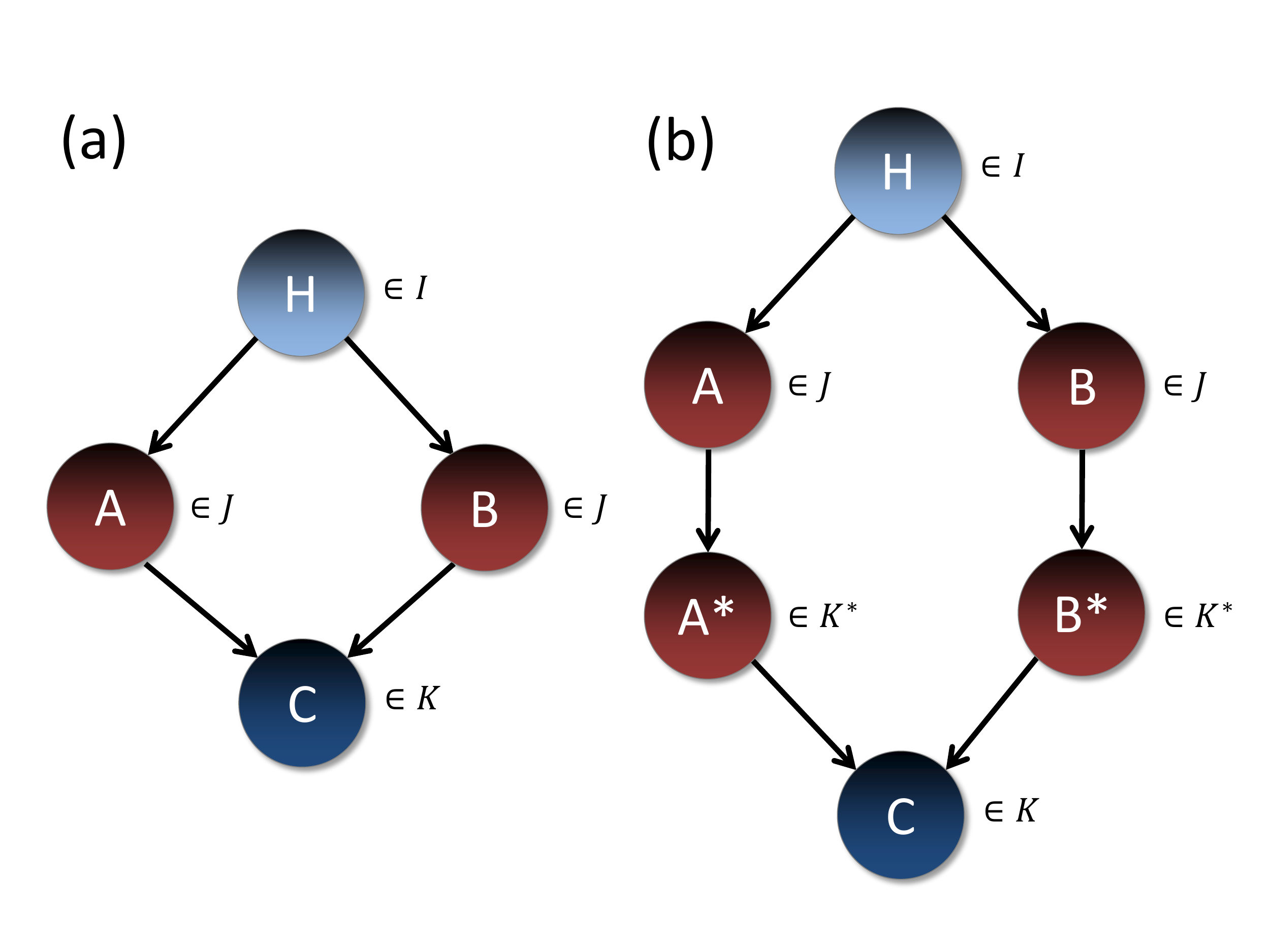}\protect\caption{\label{FigDistFusion}Graphical models of fusion configurations for
(a) centralized fusion of features, and (b) distributed PBPO fusion
of features, i.e., sensor-level fusion of features followed by system-level
fusion of the resulting decisions.}
\end{figure}

We next look at a simple example with two sensors, each producing
one feature, and a discrete, finite object space. It is generally
no longer possible to study the fusion performance symbolically, but
we can compute it numerically using (\ref{LevelCurve}). Let $I=\{0,1,2,3\}$
and $J_{1}=J_{2}=\mathbb{R}$. For the sensors $A$ and $B$, let
$A|H$ and $B|H$ be Gaussian variables for each $H\in I$, with $E(A|H)=E(B|H)=\{0,1,2,3\}$,
$\mathrm{var}(A|H)=\{1.7,0.4,3,1\}$ and $\mathrm{var}(B|H)=\{0.5,2,0.7,2\}$.
We consider the cases $K=\{0,1,2,3\}$ and $K=[0,3]$, corresponding
to hard or soft decisions at the fusion center. The resulting fusion
rules and their performance functions are shown in Figure \ref{FigExMixed}.
The hard decision fusion rule contains small ``islands'' surrounded
by larger regions where different decisions are made, and this behavior
is typical of problems where $I$ and $K$ are discrete but the $J_{m}$
are continuous. The soft decision scenario has an improved Bayes risk
(0.35536) over the hard decision case (0.43775), reflecting the fact
that more information is preserved about the object at the fusion
center. However, some of the classification performance of the object
$H=1$ was traded off for improved performance with $H=0$.\\

These scenarios can also be placed in the context of Corollary \ref{CorGaussSep}
and the distributed fusion model in Figure \ref{FigDistFusion} (b).
If we take $H$, $A$ and $B$ as above and set $K^{\star}=\mathbb{R}$
and $K=[0,3]$, then the fusion centers at $A^{\star}$ and $B^{\star}$
are simply identity mappings that pass the outputs of $A$ and $B$
into $C$, so the Bayes risk at $C$ is the same as the soft decision
scenario above. On the other hand, if we take $K^{\star}=\{0,1,2,3\}$,
so that the outputs of $A$ and $B$ are first reduced to hard decisions
before the fusion at $C$, then the Bayes risk at $C$ turns out to
be 0.57862, even though the final decision space $K$ is unchanged.
This shows how distributed fusion can hurt performance when enough
information is lost at $A^{\star}$ and $B^{\star}$, as opposed to
fusing $A$ and $B$ directly at $C$.\\

\begin{figure}[h]
\centering{}\includegraphics[trim=0.2in 0in 0.5in 0in, clip=true, scale=0.6]{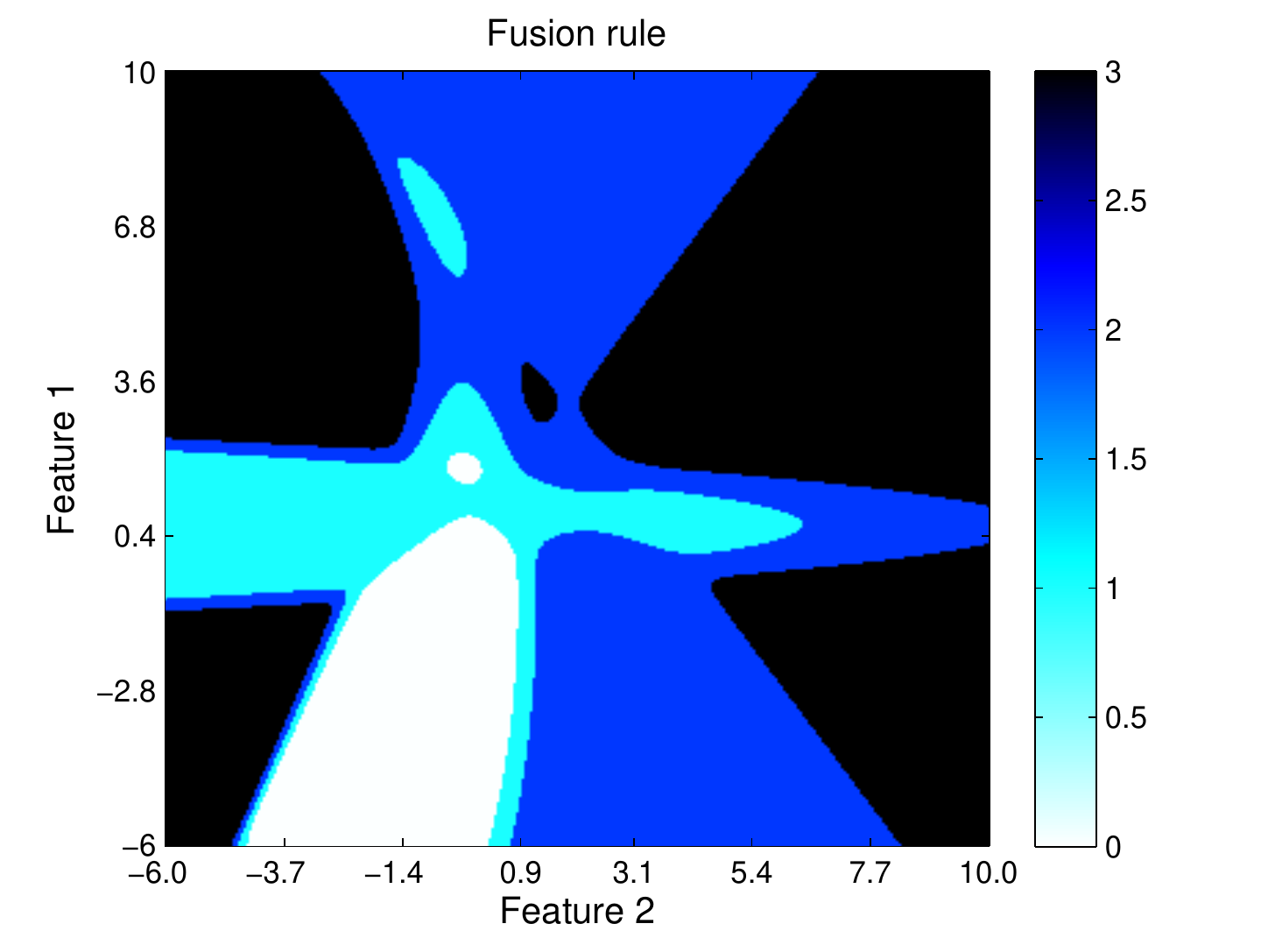}\includegraphics[trim=0.2in 0in 0.5in 0in, clip=true, scale=0.6]{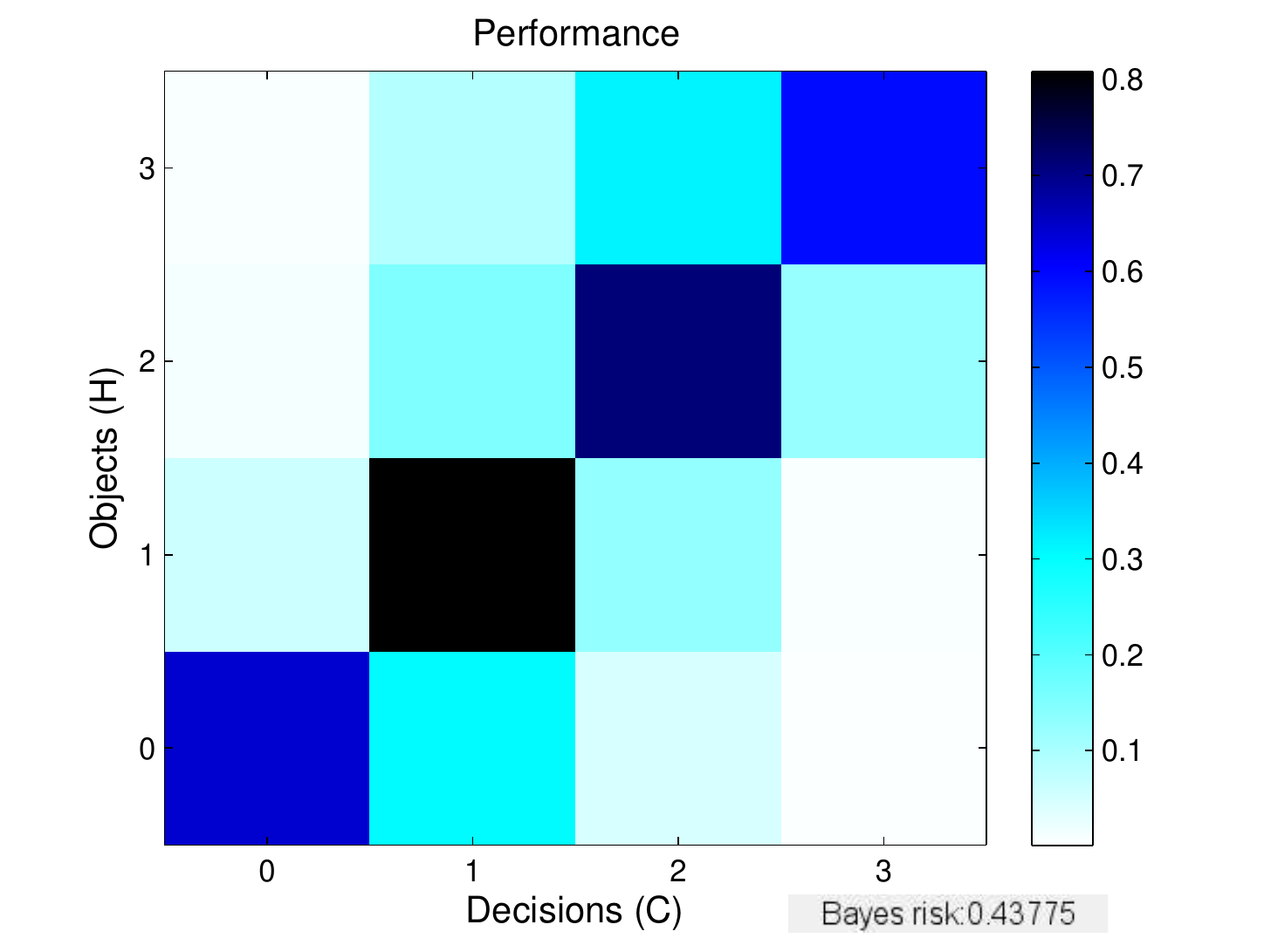}\\
\includegraphics[trim=0.2in 0in 0.5in 0in, clip=true, scale=0.6]{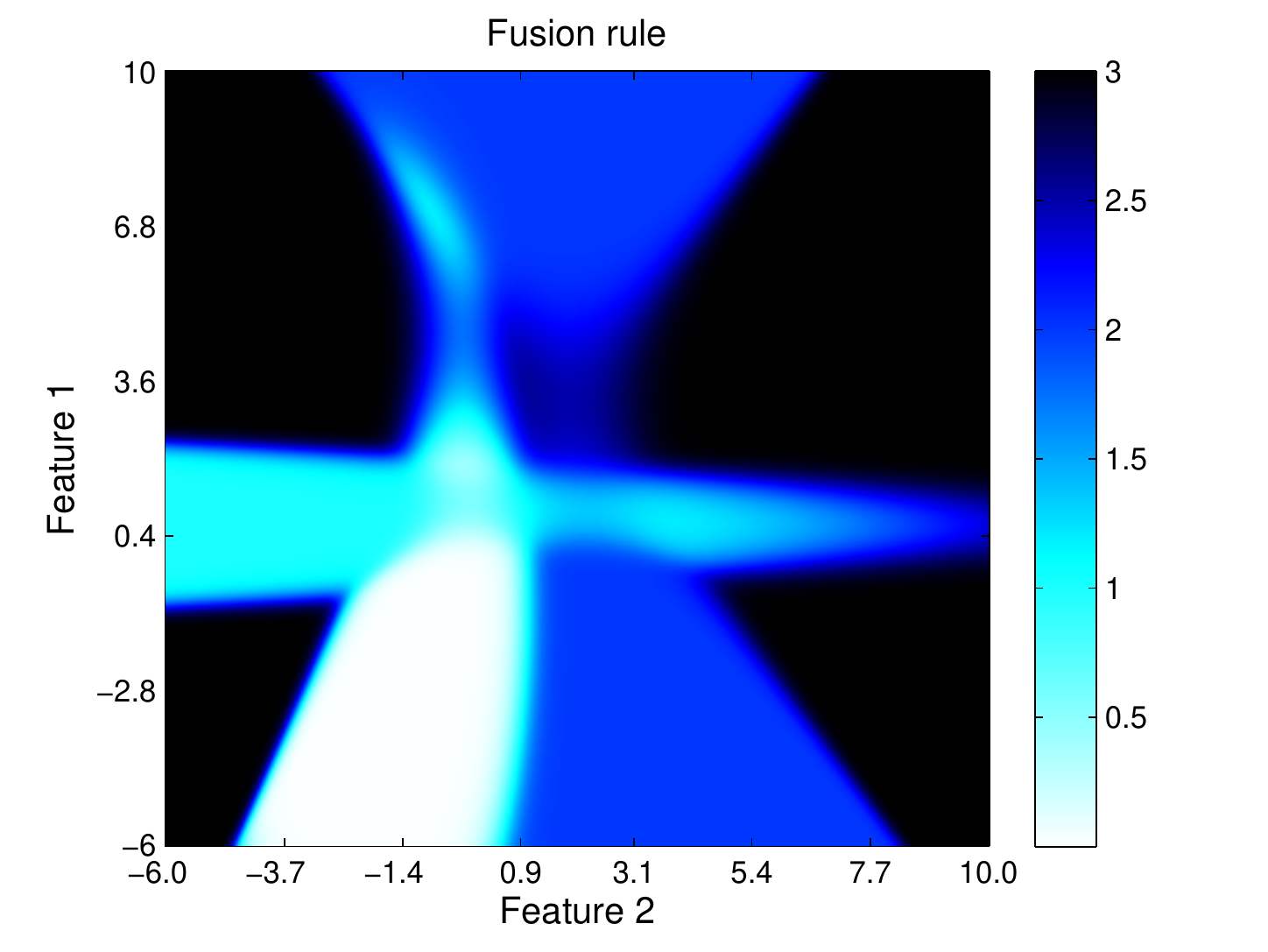}\includegraphics[trim=0.2in 0in 0.5in 0in, clip=true, scale=0.6]{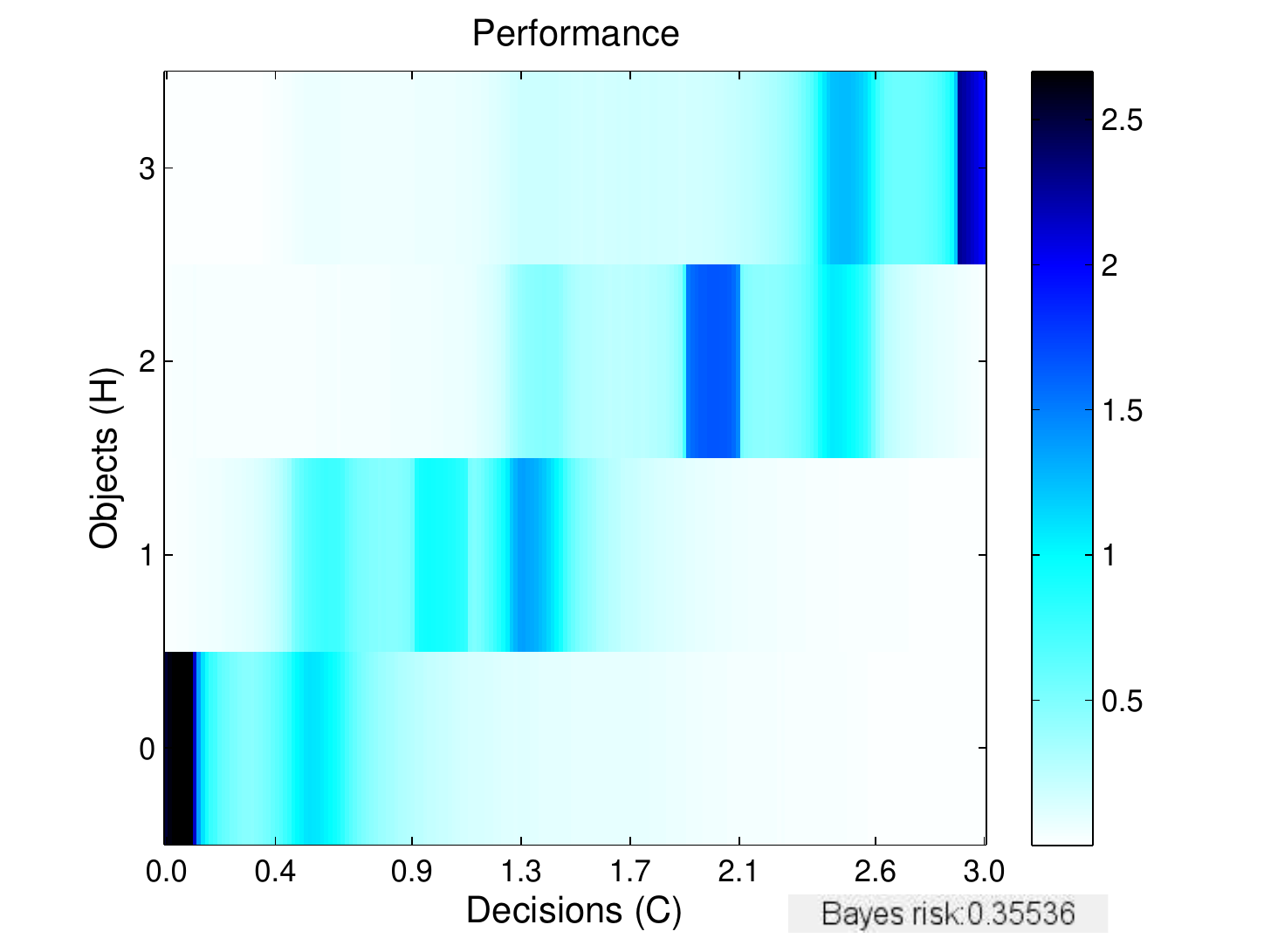}\protect\caption{\label{FigExMixed}Fusion rules and their performance with a discrete
object space, Gaussian sensor outputs and a discrete (top) and continuous
(bottom) decision space.}
\end{figure}

Another, similar type of scenario can be considered with discrete
features and shows how random fusion rules can enter the picture.
Let $I=K=\{1,2\}$ and $J_{1}=J_{2}=\{0,1,2,3,...\}$. Let $P(H=1)=P(H=2)=\frac{1}{2}$
and $A|H$ and $B|H$ be Poisson variables with rate parameter $H$.
By Theorem \ref{ThmDiscFusion}, the deterministic fusion rule for
a quadratic cost is given by taking $f(A,B)=1$ when $A+B\leq2$ and
$f(A,B)=2$ otherwise. In other words, there are only six $(A,B)$
pairs with $f(A,B)=1$. The classification performance of this rule
can be found explicitly, with the false positive rate $P(H=1|C=2)=1-5e^{-2}$
and the miss rate $P(H=2|C=1)=13e^{-4}$. Now the only way to improve
the miss rate is by mapping one of the six $(A,B)$ values to $f(A,B)=2$
instead, but there are a countable number of fusion rules that do
this and their performances can only take on specific values. If we
change the cost to be such that the false positive rate can be at
most $1-(5-\epsilon)e^{-2}$ for some small $\epsilon>0$, then there
are random fusion rules with lower miss rates than any of the deterministic
rules. For example, the random fusion rule $g$ that takes $g(1,1)=2$
with probability $\epsilon$ and $1$ otherwise, and $g(A,B)=f(A,B)$
for all other $(A,B)$, is a (non-unique) optimal choice with $P(H=1|C=2)=1-(5-\epsilon)e^{-2}$
and $P(H=2|C=1)=(13-4\epsilon)e^{-4}$. This situation is typical
when the feature and/or decision spaces are discrete, with a random
fusion rule having more ``wiggle room'' to achieve specific classification
probabilities, and is essentially a special case of the classical
Neyman-Pearson lemma for likelihood ratio tests (\cite{Po94}, p.
23).\\

We finally consider one more numerical example with more complex features
that have mixture distributions. Let $I=K=[0,4]$ and $J_{1}=J_{2}=[0,5]$.
Suppose that $H$ has a Gaussian prior with mean $0$ and variance
$1$ as in Proposition \ref{ThmGauss}, but $A$ is an exponential-uniform
mixture with the form $\mathbf{d}_{A|H}(a,h)=\frac{h}{2}e^{-ha}+\frac{1}{2h}\chi_{[0,h]}(a)$,
where $\chi$ is the indicator function, and $B$ follows the Gaussian
mixture distribution $\mathbf{d}_{B|H}(b,h)=\frac{1}{2}(\frac{\pi}{50})^{-1/2}e^{-50(b-h)^{2}}+$$\frac{1}{2}(18\pi)^{-1/2}he^{-\frac{1}{18}h^{2}(b-0.7)^{2}}$.
The optimal fusion rule and its performance function are shown in
Figure \ref{FigExMixture}. Note that the optimal fusion rule never
outputs decisions greater than about $2.5$, which is reflected in
its performance and is similar to the scenario in Proposition \ref{ThmExpo}.

\begin{figure}[H]
\centering{}\includegraphics[trim=0.2in 0in 0.5in 0in, clip=true, scale=0.6]{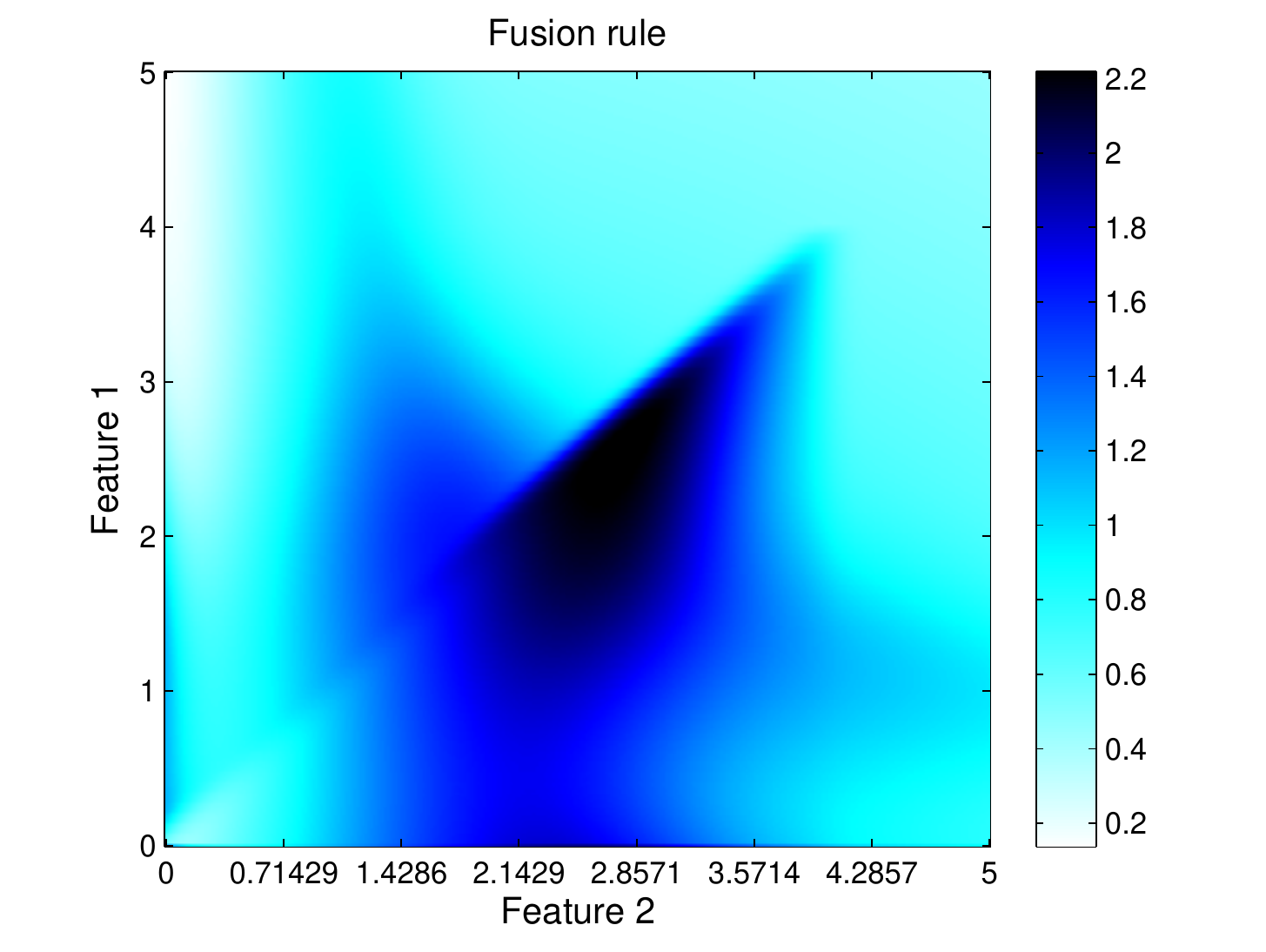}\includegraphics[trim=0.1in 0in 0.5in 0in, clip=true, scale=0.6]{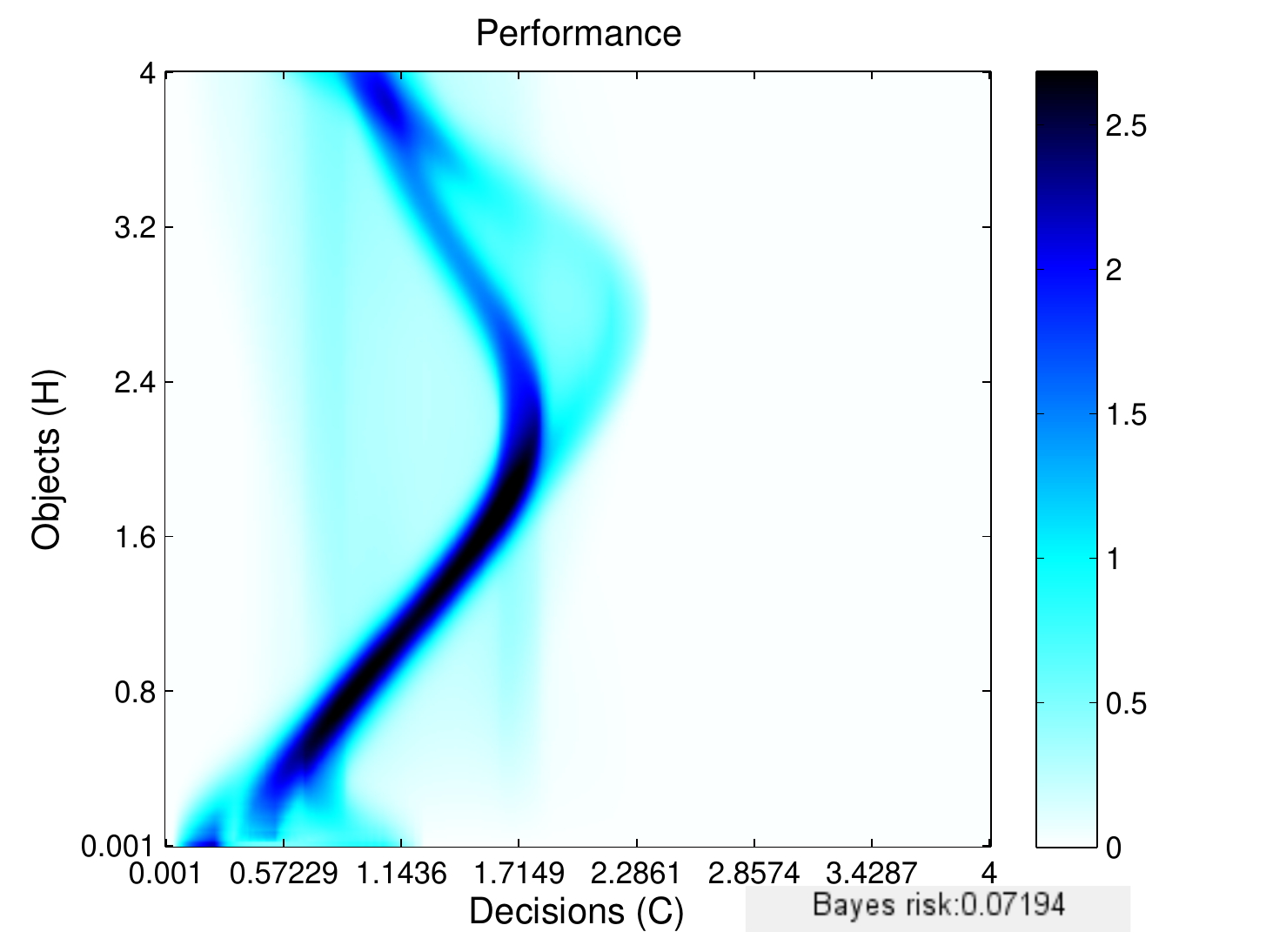}\protect\caption{\label{FigExMixture}Fusion configuration with mixture distributions.}
\end{figure}

\section{Computational methods\label{SecNumerical}}

We discuss some approaches to efficiently compute the performance
$\mathrm{\mathbf{d}}_{C|H}$ and Bayes risk $E(\mathrm{cost}(C,H))$
of the fusion rule (\ref{FusionRuleProof}) in realistic scenarios
with a large number of sensors or features. We assume that the densities
$\mathbf{d}_{A_{m}|H}$ are available in a symbolic (but not necessarily
closed) form, coming from either a physics-based model or a kernel
density estimate or other statistical model fitted to experimental
data. The $A_{m}|H$ can be discrete or continuous variables and have
different dimensions $N_{m}$, depending on what kinds of information
each sensor puts out, but they are assumed to be well localized in
their respective feature spaces. Having a symbolic expression for
$\mathbf{d}_{A_{m}|H}$ as opposed to a tabulation on a discrete grid
(for continuous variables) allows us to sample at points anywhere
in $J_{m}$, which is essential for the use of randomized integration
methods that scale efficiently in the total number of dimensions $\mathbf{N}=\sum_{m=1}^{M}N_{m}$.
For sensors that produce several different features ($N_{m}\geq1$),
$\mathbf{d}_{A_{m}|H}$ is often not specified explicitly but is given
in terms of a probabilistic graphical model \cite{Jo04} that describes
the dependencies among the individual features $A_{m_{n}}$ and any
intermediate (nuisance) variables.\\

In typical fusion scenarios in practice, it is common to combine several
hundred features at once, and determining the performance $\mathrm{\mathbf{d}}_{C|H}$
leads to a high dimensional integral in (\ref{LevelCurve}) that is
intractable by conventional lattice-based approaches. However, one
key property of statistical inference problems such as this is that
the sensor densities $\mathbf{d}_{A_{m}|H}$ are all nonnegative,
which means that the integral (\ref{LevelCurve}) involves no cancellation
and the largest contribution comes from around the local maxima of
the $\mathbf{d}_{A_{m}|H}$. We outline a Monte Carlo importance sampling
approach that is motivated by these observations. We collect samples
$\{(\mathbf{a}_{l},h_{l})\}_{1\leq l\leq L}$ drawn from the proposal
distribution $\left(\prod_{m=1}^{M}\mathbf{d}_{A_{m}|H}(a_{m},h)da_{m}\right)\mathbf{d}_{H'}(h)dh$,
which prioritizes points around the maxima of (\ref{LevelCurve})
and reduces the variance in the resulting estimates \cite{GS90}.
$H'$ is a variable with either the same distribution as $H$ or the
uniform distribution on $I$. This reflects an accuracy tradeoff between
finding the Bayes risk and the performance, with the former choice
more efficient for calculating the Bayes risk and the latter preferable
for finding the performance. For a given number of samples, the former
choice will pick the points in $H$ that contribute the most to the
Bayes risk, but may leave a large portion of the domain $I\times K$
uncovered by the samples and result in an inaccurate performance function.
On the other hand, sampling $H$ according to a uniform distribution
on $I$ will distribute the points evenly across $I\times K$, even
though only a few may add significantly to the Bayes risk. Note that
finding the performance $\mathrm{\mathbf{d}}_{C|H}$ corresponds to
a ``downstream'' calculation in the probabilistic graphical model
in Figure \ref{FigFusion}, where samples are generated at $H$ and
propagate downward through the sensors $A_{m}$ into $C$. This is
in contrast to the more conventional task of doing posterior inference
on data, which amounts to finding $\mathrm{\mathbf{d}}_{H|\mathbf{A}}$
and is an ``upstream'' calculation that involves the Bayes formula.\\

The sensor densities $\mathbf{d}_{A_{m}|H}$ can be sampled from using
a variety of approaches, depending on how each one is specified. For
graphical models, standard Markov Chain Monte Carlo (MCMC) methods
such as Gibbs sampling and its variants (see \cite{Bi06} for details)
allow us to obtain samples from a high-dimensional joint density that
would be impossible to sample from directly, unless it has some special
structure. However, one such case arises frequently in practice, where
each $A_{m}|H$ is jointly Gaussian. Many standard types of radar
and acoustic sensors for explosive detection collect measurements
such as the signal's return time or its power at specific frequencies.
These measurements are typically formed by averaging a large number
of consecutive ``looks'' to smooth out the effects of noise, which
results in each $A_{m}|H$ being approximately an $\mathbb{R}^{N_{m}}$-valued
Gaussian variable, and the resulting joint distribution is easy to
sample from directly. The individual components of $A_{m}|H$ are
usually not independent, and may represent measurements such as the
signal intensity at different frequencies, which are all influenced
by an explosive substance with a given spectral profile. However,
we can simply take $N_{m}$ independent Gaussian samples $\mathbf{G}=\{G_{n}\}_{1\leq n\leq N_{m}}$
(using the Gaussian quantile function) with mean $0$ and variance
$1$, and ``color'' them appropriately by taking $\mu+\mathbf{V}\mathbf{G}$,
where $\mu=\mathrm{mean}(A_{m}|H)$ and $\mathbf{V}\mathbf{V}^{T}=\mathrm{cov}(A_{m}|H)$
is the Cholesky decomposition.\\

Once a sequence of samples $\{(\mathbf{a}_{l},h_{l})\}$ has been
obtained, we can use the fact that for each $\mathbf{a}$ there is
exactly one $c\in K$ with $c=f(\mathbf{a})$, so the delta function
in the performance integral (\ref{LevelCurve}) never has to be computed
or approximated explicitly. Instead, we discretize the decision space
$K$, and for each sample $(\mathbf{a}_{l},h_{l})$, we find the closest
$c$ in the discretized space and add $\prod_{m=1}^{M}\mathbf{d}_{A_{m}|H}((a_{l})_{m},h)$
to the sum corresponding to that $(c,h)$ pair, effectively producing
a weighted histogram of $\{f(\mathbf{a}_{l}|h_{l})\}$ to determine
$\mathrm{\mathbf{d}}_{C|H}$. The Bayes risk may be found from the
same samples $\{(\mathbf{a}_{l},h_{l})\}$ directly, or from the performance
by computing $\int_{I\times K}\mathrm{cost}(c,h)\mathrm{\mathbf{d}}_{C|H}(c,h)\mathrm{\mathbf{d}}_{H}(h)dcdh$.
Standard confidence bounds on the estimated Bayes risk and performance
function can be found from the central limit theorem, which also holds
for dependent variables with sufficiently good ergodicity or mixing
properties (as is the case with some MCMC sampling patterns \cite{JG12}).
For example, let $B_{L}(W)$ be the Monte Carlo estimate of the Bayes
risk $B(W)$ under the cost function $W$ with $L$ samples taken
from the proposal distribution with $H'=H$. Then for any confidence
level $0<R<1$, as $L\to\infty$,
\[
P\left(\left|B_{L}(W)-B(W)\right|<\left(\frac{2}{L}\int_{I\times\mathbf{J}}\left(W(f(\mathbf{a})-h)-B(W)\right)^{2}G(\mathbf{a},h)d\mathbf{a}\, dh\right)^{1/2}\mathrm{erf}^{-1}(R)\right)\to R,
\]
and for fusion problems and costs covered by Theorem \ref{ThmLpFusion},
this implies that for sufficiently large $L$,
\[
P\left(\left|B_{L}(W)-B(W)\right|<\sqrt{\frac{2}{L}}\left(B_{L}(W^{2})^{1/2}+B_{L}(W)\right)\mathrm{erf}^{-1}(R)\right)\geq R.
\]

We use the approach discussed here to study a larger version of the
fusion scenario considered in Proposition \ref{ThmGauss}. In Figure
\ref{FigExComputational}, we take $M=300$ total features in Proposition
\ref{ThmGauss} with $60000$ i.i.d. points sampled from the proposal
distribution with a uniform $H'$. The performance can be compared
with Figure \ref{FigExGauss} and, as expected, is much more concentrated
along the main diagonal, with a correspondingly lower Bayes risk.
Note that as we increase the number of features $M$, the quadratic
Bayes risk decays at least as fast as $O(\frac{1}{M})$ by Theorem
\ref{ThmAsymptotic}, so we need roughly $L=O(M)$ sampling points
to achieve a fixed relative error in the calculation.

\begin{figure}[h]
\centering{}\includegraphics[clip,scale=0.6]{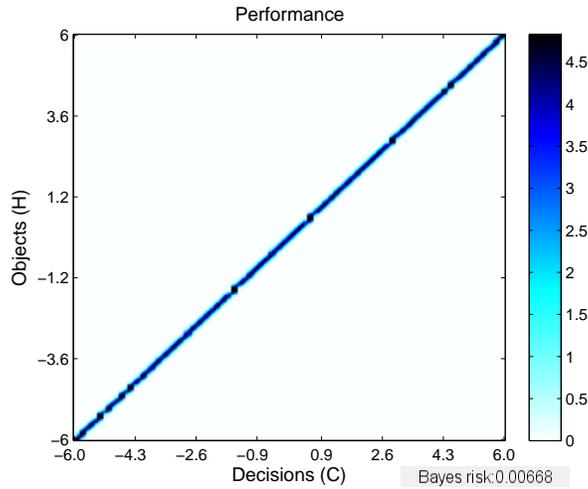}\protect\caption{\label{FigExComputational}Fusion configuration from Proposition \ref{ThmGauss}
with $u=v=1$ and $M=300$.}
\end{figure}

\section{Conclusion\label{SecConclusion}}

We have described a mathematical and computational framework for analyzing
the expected performance of deterministically combining statistical
information under a specified optimality criterion. These results
can be applied to many diverse situations, both in the sensors field
as well as other domains, and can also be extended in a number of
other directions. In particular, many applications involve online
formulations of this problem where the sensor statistics are not known
in advance and need to be estimated from real-time data streams, and
will be explored in future work.

\bibliographystyle{plain}
\bibliography{Fullbib}

\section*{Appendix A: Proofs of Theorems\label{SecAppendix}}
\begin{proof}[Proof of Proposition \ref{ThmDetFusion}]
Let $\mathbf{J}=\prod_{m=1}^{M}J_{m}$ be the joint feature space
with dimension $\mathbf{N}=\sum_{m=1}^{M}N_{m}$. The minimum Bayes
risk is
\begin{eqnarray}
 &  & \inf_{\{C:C(\omega)\in K\}}E(\mathrm{cost}(C,H))\nonumber \\
 & = & \inf_{\mathbf{d}_{C|\mathbf{A}}}\int_{I\times\mathbf{J}\times K}\mathrm{cost}(c,h)\mathbf{d}_{C|\mathbf{A}}(c,\mathbf{a})\left(\prod_{m=1}^{M}\mathbf{d}_{A_{m}|H}(a_{m},h)\right)\mathbf{d}_{H}(h)dc\, d\mathbf{a}\, dh\label{MinCost}
\end{eqnarray}
subject to the constraint that for every $\mathbf{a}\in\mathbf{J}$,
$\mathbf{d}_{C|\mathbf{A}}(\cdot,\mathbf{a})$ is a probability density,
or in other words,
\begin{eqnarray}
\int_{K}\mathbf{d}_{C|\mathbf{A}}(c,\mathbf{a})dc & = & 1,\label{Cond1}\\
\mathbf{d}_{C|\mathbf{A}}(c,\mathbf{a}) & \geq & 0.\label{Cond2}
\end{eqnarray}
In addition, we want the fusion rule to be deterministic, so for all
$\mathbf{a}\in\mathbf{J}$, there is a single finite point $f=f(\mathbf{a})$
in $K$ such that
\begin{equation}
\mathrm{supp}(\mathbf{d}_{C|\mathbf{A}}(\cdot,\mathbf{a}))=\{f(\mathbf{a})\}.\label{Cond3}
\end{equation}
The condition (\ref{Cond3}) makes the problem nonconvex, but it can
be simplified as follows. Let $\mathbf{a}$ be fixed. We want to show
that $\mathbf{d}_{C|\mathbf{A}}(c,\mathbf{a})=\delta(c-f(\mathbf{a}))$.
The condition (\ref{Cond3}) is saying that there is a point $f$
such that $\int_{K}\mathbf{d}_{C|\mathbf{A}}(c,\mathbf{a})\phi(c)dc=0$
for all test functions $\phi\in C_{c}^{0}(K)$ such that $\phi(f)=0$.
Since $\mathbf{d}_{C|\mathbf{A}}$ is compactly supported, this also
holds for the larger class $\phi\in C^{0}(K)$ with $\phi(f)=0$.
Any $\tilde{\phi}\in C^{0}(K)$ can be put into this form by writing
$\tilde{\phi}=\tilde{\phi}-\tilde{\phi}(f)$, and using (\ref{Cond1})
along with linearity implies that $\int_{K}\mathbf{d}_{C|\mathbf{A}}(c,\mathbf{a})\tilde{\phi}(c)dc=\tilde{\phi}(f)$.
Since any generalized function in $D^{0}(K)$ is uniquely determined
by its action on $C_{c}^{0}(K)$ \cite{SW71}, it follows that $\mathbf{d}_{C|\mathbf{A}}(c,\mathbf{a})=\delta(c-f(\mathbf{a}))$,
which implies (\ref{Cond2}) as well. This means that the minimization
problem (\ref{MinCost}) is equivalent to
\begin{equation}
\inf_{\{f:f(\mathbf{J})\subset K\}}\int_{I\times\mathbf{J}}\mathrm{cost}(f(\mathbf{a}),h)\left(\prod_{m=1}^{M}\mathbf{d}_{A_{m}|H}(a_{m},h)\right)\mathbf{d}_{H}(h)\, d\mathbf{a}\, dh,\label{NewMinCost}
\end{equation}
where there are no additional constraints.\\

We first consider the optimization problem (\ref{NewMinCost}) over
the larger space defined by
\begin{equation}
L^{2,H}(\mathbf{J})=\{f:\left\Vert f\right\Vert _{L^{2,H}(\mathbf{J})}=\left(\int_{I\times\mathbf{J}}f(\mathbf{a})^{2}\left(\prod_{m=1}^{M}\mathbf{d}_{A_{m}|H}(a_{m},h)\right)\mathbf{d}_{H}(h)\, d\mathbf{a}\, dh\right)^{1/2}<\infty\}\label{L2Weighted}
\end{equation}
and without any other constraint. In this case, (\ref{NewMinCost})
becomes a standard problem of minimizing a positive quadratic form
over a Hilbert space, with at least one feasible solution since $E(H^{2})<\infty$.
There exists a unique solution to this problem in $L^{2,H}(\mathbf{J})$
\cite{Pa11,SW71}, which is thus unique pointwise up to sets with
zero $\mathbf{N}$-dimensional Lebesgue measure. To find this solution,
we set up the Euler-Lagrange equation \cite{Ev98}
\begin{equation}
\frac{\partial}{\partial f}\int_{I}\frac{1}{2}(f-h)^{2}\left(\prod_{m=1}^{M}\mathbf{d}_{A_{m}|H}(a_{m},h)\right)\mathbf{d}_{H}(h)dh=0,\label{EulerLagrange}
\end{equation}
which can be solved to obtain
\begin{equation}
f(\mathbf{a})=\frac{\int_{I}h\left(\prod_{m=1}^{M}\mathbf{d}_{A_{m}|H}(a_{m},h)\right)\mathbf{d}_{H}(h)dh}{\int_{I}\left(\prod_{m=1}^{M}\mathbf{d}_{A_{m}|H}(a_{m},h)\right)\mathbf{d}_{H}(h)dh}.\label{FusionRuleProof}
\end{equation}
This is the only stationary point of the functional in (\ref{NewMinCost})
and the only candidate for a minimum. The denominator in (\ref{FusionRuleProof})
is simply the joint density $\mathbf{d}_{\mathbf{A}}(\mathbf{a})$,
and since $\mathbf{d}_{A_{m}|H}(a_{m},h)>0$, it follows that $\mathbf{d}_{\mathbf{A}}(\mathbf{a})>0$
as well. If either $\sup(h:h\in K)$ or $\inf(h:h\in K)$ are finite,
we can check that
\begin{eqnarray}
\sup(f(\mathbf{J}):\mathbf{a}\in\mathbf{J}) & \leq & \sup_{\mathbf{a}\in\mathbf{J}}\frac{\int_{I}\sup(h:h\in I)\left(\prod_{m=1}^{M}\mathbf{d}_{A_{m}|H}(a_{m},h)\right)\mathbf{d}_{H}(h)dh}{\int_{I}\left(\prod_{m=1}^{M}\mathbf{d}_{A_{m}|H}(a_{m},h)\right)\mathbf{d}_{H}(h)dh}\nonumber \\
 & = & \sup(h:h\in I)\nonumber \\
 & \leq & \sup(h:h\in K),\label{FusionBound}
\end{eqnarray}
and in the same manner, $\inf f(\mathbf{J})\geq\inf(h:h\in K)$, so
$f$ is a feasible solution for (\ref{NewMinCost}) and is thus in
fact the optimal fusion rule.
\end{proof}
$\quad$
\begin{proof}[Proof of Theorem \ref{ThmPerformance}]
First, note that if for all $c\in K$, the level sets $\{\mathbf{a}\in\mathbf{J}:c=f(\mathbf{a})\}$
have zero $\mathbf{N}$-dimensional measure, then the performance
is given by
\begin{eqnarray}
\mathbf{d}_{C|H}(c,h) & = & \int_{\mathbf{J}}\delta(c-f(\mathbf{a}))\left(\prod_{m=1}^{M}\mathbf{d}_{A_{m}|H}(a_{m},h)\right)d\mathbf{a}.\label{LevelCurve2}
\end{eqnarray}
The integral of (\ref{LevelCurve2}) over $c\in K$ is always $1$
since $\mathbf{d}_{C|H}$ is a probability density. If the level sets
have positive measure, (\ref{LevelCurve2}) is no longer well defined,
but we can still examine a ``smoothed out'' version of (\ref{LevelCurve2})
by considering the Fourier transform (or characteristic function)
of $\mathbf{d}_{C|H}$, given by $E(e^{-2\pi iCz}|H)$. Since $\int_{J_{m}}\mathbf{d}_{A_{m}|H}(a_{m},h)da_{m}=1$,
\begin{eqnarray}
\sup_{z\in\mathbb{R}}\left|\frac{\partial^{l}}{\partial z^{l}}E(e^{-2\pi iCz}|H)\right| & = & \sup_{z\in\mathbb{R}}\left|E((-2\pi iC)^{l}e^{-2\pi iCz}|H)\right|\label{Fourier}\\
 & = & \sup_{z\in\mathbb{R}}\left|\int_{\mathbf{J}}\left(-2\pi if(\mathbf{a})\right)^{l}e^{-2\pi izf(\mathbf{a})}\left(\prod_{m=1}^{M}\mathbf{d}_{A_{m}|H}(a_{m},H)\right)d\mathbf{a}\right|\nonumber \\
 & \leq & \left(2\pi\left\Vert f\right\Vert _{L^{\infty}(\mathbf{J})}\right)^{l}\nonumber \\
 & \leq & \left(2\pi\max(|c|:c\in K)\right)^{l}\nonumber 
\end{eqnarray}
for every $l\geq0$. Now for the $m'$th sensor and $n$th feature
that satisfies (\ref{BalanceCond}), the fact that $\mathbf{d}_{A_{m}|H}$
are all positive on $I$ implies
\begin{eqnarray*}
\left|\frac{\partial f(\mathbf{a})}{\partial(a_{m'})_{n}}\right| & = & \frac{1}{\left|\mathbf{d}_{\mathbf{A}}(\mathbf{a})^{2}\right|}\bigg|\int_{I}\int_{I}\left(\mathbf{d}_{A_{m'}|H}(a_{m'},\tilde{h})\frac{\partial\mathbf{d}_{A_{m'}|H}(a_{m'},h)}{\partial(a_{m'})_{n}}-\mathbf{d}_{A_{m'}|H}(a_{m'},h)\frac{\partial\mathbf{d}_{A_{m'}|H}(a_{m'},\tilde{h})}{\partial(a_{m'})_{n}}\right)\times\\
 &  & \quad\left(\prod_{m\not=m'}\mathbf{d}_{A_{m'}|H}(a_{m},h)\mathbf{d}_{A_{m}|H}(a_{m},\tilde{h})\right)h\mathbf{d}_{H}(h)\mathbf{d}_{H}(\tilde{h})dhd\tilde{h}\bigg|\\
 & > & 0.
\end{eqnarray*}
This means that $\nabla f(\mathbf{a})$ has a nonzero component for
every $\mathbf{a}\in\mathbf{J}$. We form a stationary phase approximation
of the Fourier transform (see \cite{Ev98,Ho90}) by integrating by
parts $l$ times,
\begin{eqnarray}
\left|E(e^{-2\pi iCz}|H)\right| & = & \left|\int_{\mathbf{J}}\frac{1}{\left(-2\pi iz\right)^{l}}\left((T_{f})^{l}e^{-2\pi izf(\mathbf{a})}\right)\left(\prod_{m=1}^{M}\mathbf{d}_{A_{m}|H}(a_{m},H)\right)d\mathbf{a}\right|\nonumber \\
 & = & \frac{1}{\left(2\pi z\right)^{l}}\left|\int_{\mathbf{J}}e^{-2\pi izf(\mathbf{a})}(T_{f}^{*})^{l}\left(\prod_{m=1}^{M}\mathbf{d}_{A_{m}|H}(a_{m},H)\right)d\mathbf{a}\right|\nonumber \\
 & \leq & \frac{1}{\left(2\pi z\right)^{l}}\int_{\mathbf{J}}\left|(T_{f}^{*})^{l}\left(\prod_{m=1}^{M}\mathbf{d}_{A_{m}|H}(a_{m},H)\right)\right|d\mathbf{a}\label{StationaryPhase}
\end{eqnarray}
where $T_{f}$ is the linear differential operator $T_{f}g(\mathbf{a})=\frac{\nabla g(\mathbf{a})\cdot\nabla f(\mathbf{a})}{|\nabla f(\mathbf{a})|^{2}}$
and $T_{f}^{*}g(\mathbf{a})=-\mathrm{div}\left(\frac{g(\mathbf{a})\nabla f(\mathbf{a})}{\left|\nabla f(\mathbf{a})\right|^{2}}\right)$
is its adjoint. The integral in (\ref{StationaryPhase}) is finite
for each $l$ due to the conditions on $\mathbf{J}$ and $\mathbf{d}_{A_{m}|H}$.
This means that for each $h$, $E(e^{-2\pi iCz}|H=h)$ is a smooth
function that decays faster than any polynomial. The inverse Fourier
transform $\mathbf{d}_{C|H}(\cdot,h)$ is thus also a smooth function
with the same decay \cite{SW71}.
\end{proof}
$\quad$
\begin{proof}[Proof of Theorem \ref{ThmDiscFusion}]
To simplify the notation, we let
\[
G(\mathbf{a},h)=\left(\prod_{m=1}^{M}\mathbf{d}_{A_{m}|H}(a_{m},h)\right)\mathbf{d}_{H}(h).
\]
For any fusion rule $g$, the quadratic Bayes risk can be expanded
by writing
\begin{eqnarray}
 &  & \frac{1}{2}\int_{I\times\mathbf{J}}(g(\mathbf{a})-h)^{2}G(\mathbf{a},h)\, d\mathbf{a}\, dh,\nonumber \\
 & = & \frac{1}{2}\int_{I\times\mathbf{J}}\left((g(\mathbf{a})-f^{*}(\mathbf{a}))^{2}+(f^{*}(\mathbf{a})-h)^{2}+2(g(\mathbf{a})-f^{*}(\mathbf{a}))(f^{*}(\mathbf{a})-h)\right)G(\mathbf{a},h)\, d\mathbf{a}\, dh.\label{ThreeTerms}
\end{eqnarray}

For bounded $K$, the third term in (\ref{ThreeTerms}) can be calculated
using the fact that the densities $\mathbf{d}_{A_{m}|H}$ and $\mathbf{d}_{H}$
are all nonnegative,
\begin{eqnarray}
 &  & \left|\int_{I\times\mathbf{J}}2(g(\mathbf{a})-f(\mathbf{a}))(f(\mathbf{a})-h)G(\mathbf{a},h)d\mathbf{a}\, dh\right|\nonumber \\
 & \leq & 2\left\Vert g-f\right\Vert _{L^{\infty}(\mathbf{J})}\int_{\mathbf{J}}\left|\int_{I}(f(\mathbf{a})-h)G(\mathbf{a},h)dh\right|d\mathbf{a}\nonumber \\
 & \leq & 2|K|\int_{\mathbf{J}}\left|\left(\frac{\int_{I}\tilde{h}G(\mathbf{a},\tilde{h})d\tilde{h}}{\int_{I}G(\mathbf{a},\tilde{h})d\tilde{h}}\int_{I}G(\mathbf{a},h)dh-\int_{I}hG(\mathbf{a},h)dh\right)\right|d\mathbf{a}\nonumber \\
 & = & 0.\label{ApproxRisk}
\end{eqnarray}
Since (\ref{ApproxRisk}) is independent of $K$, this in fact holds
for unbounded $K$ as well. The second term in (\ref{ThreeTerms})
is simply the Bayes risk of $f$ itself, so we only need to show that
the first term (the Bayes risk of the additional contribution from
the approximation error) is minimized by the feasible solution $g=f^{*}$.
Since $K$ is closed, $f^{*}(\mathbf{a})\in K$ and minimizes $(f^{*}(\mathbf{a})-f(\mathbf{a}))^{2}$
at each point $\mathbf{a}\in J^{2}$, and the optimality of $f^{*}$
follows from the nonnegativity of $G(\mathbf{a},h)$. Finally, to
show that $f^{*}$ is the unique optimal fusion rule, for any other
optimal fusion rule $g$, (\ref{ThreeTerms}) and (\ref{ApproxRisk})
show that
\[
\frac{1}{2}\int_{I\times\mathbf{J}}(g(\mathbf{a})-f^{*}(\mathbf{a}))^{2}G(\mathbf{a},h)\, d\mathbf{a}\, dh=0,
\]
and since each $\mathbf{d}_{A_{m}|H}(\cdot,h)$ is continuous by assumption,
$g-f^{*}=0$ on $\mathbf{J}$ except possibly on a set of zero $\mathbf{N}$-dimensional
measure.
\end{proof}
$\quad$
\begin{proof}[Proof of Theorem \ref{ThmLpFusion}]
Let $\mathbf{a}\in\mathbf{J}$ be fixed and $f$ be given by (\ref{FusionRule}).
Define $E(h)=h^{2n-1}$ for integer $n>0$ and as before,
\[
G(h)=G(\mathbf{a},h)=\left(\prod_{m=1}^{M}\mathbf{d}_{A_{m}|H}(a_{m},h)\right)\mathbf{d}_{H}(h).
\]
The sub-Gaussian condition on $H$ and the continuity of $\mathbf{d}_{A_{m}|H}$
imply that $\int_{-\infty}^{\infty}e^{rh^{2}}G(h)dh<RE(e^{rH{}^{2}})$
for some constant $R$ depending only on $\mathbf{a}$. The Fourier
transform $\widehat{G}$ satisfies
\begin{eqnarray}
\left|\widehat{G}(z)\right| & \leq & \int_{-\infty}^{\infty}e^{2\pi h|\mathrm{Im}(z)|}e^{-rh^{2}}e^{rh^{2}}G(h)dh\nonumber \\
 & \leq & RE(e^{rH^{2}})\max_{h\in\mathbb{R}}\left(e^{2\pi h|\mathrm{Im}(z)|-rh^{2}}\right)\nonumber \\
 & = & RE(e^{rH^{2}})e^{\frac{\pi^{2}}{r}\mathrm{Im(z)}^{2}}\label{GaussBound}
\end{eqnarray}
for all $z\in\mathbb{C}$. This means that the integral defining $\widehat{G}$
converges uniformly in $z$ on any compact subset of $\mathbb{C}$,
so $\widehat{G}$ is an entire function, and the bound (\ref{GaussBound})
also shows that it has order at most $2$ (see \cite{Al79} for a
definition).

Now since $W$ is even and entire, its Taylor series around the origin
converges everywhere and contains only even powers $z^{2n}$, which
means that the series for $W'$ has the form
\[
W'(z)=\sum_{n=1}^{\infty}w_{n}z^{2n-1}.
\]
The Euler-Lagrange equation for the problem (\ref{NewMinCost}) with
the cost $W(c-h)$ is given by
\begin{eqnarray}
0 & = & \frac{\partial}{\partial f}\int_{I}W(f-h)\left(\prod_{m=1}^{M}\mathbf{d}_{A_{m}|H}(a_{m},h)\right)\mathbf{d}_{H}(h)dh\label{ELGeneral}\\
 & = & \sum_{n=1}^{\infty}w_{n}(E\star G)(f)\nonumber \\
 & = & \sum_{n=1}^{\infty}w_{n}\int_{-\infty}^{\infty}e^{2\pi ifz}\widehat{E}(z)\widehat{G}(z)dz\label{Plancherel}\\
 & = & \sum_{n=1}^{\infty}w_{n}(-2\pi i)^{2n-1}\left(\frac{d}{dz}\right)^{2n-1}\left(e^{2\pi ifz}\widehat{G}(z)\right)\bigg|_{z=0},\label{ELFourier}
\end{eqnarray}
where (\ref{Plancherel}) is justified because the tempered generalized
function $E$ has the (compactly supported) Fourier transform $(-2\pi i)^{2n-1}\delta^{(2n-1)}$,
while $\widehat{G}$ is a smooth function. We want to show that every
term in the sum (\ref{ELFourier}) is zero. Proposition \ref{ThmDetFusion}
says that it is zero if $w_{n}=0$ for $n\geq2$, and solving (\ref{ELFourier})
for $f$ in this case gives
\[
f=-\frac{\widehat{G}'(0)}{2\pi i\widehat{G}(0)},
\]
where $\widehat{G}(0)=\mathbf{d}_{\mathbf{A}}(\mathbf{a})>0$ as in
Proposition \ref{ThmDetFusion}. Now let $\{\lambda_{k}\}_{k\in\mathbb{Z}}$
be the zeros of $\widehat{G}$ in the right half of the complex plane,
indexed in order of increasing absolute value. Since $E(e^{-2\pi izH}|\mathbf{A=a})=\widehat{G}(z)$,
our assumptions say that for each $\lambda_{k}$, $\widehat{G}$ has
another zero $-\lambda_{k}$ in the left half plane. We expand $\widehat{G}$
using the Hadamard factorization theorem \cite{Al79} for functions
of order $2$,
\begin{eqnarray*}
e^{2\pi ifz}\widehat{G}(z) & = & e^{2\pi ifz}\widehat{G}(0)e^{\frac{\widehat{G}'(0)}{\widehat{G}(0)}z+\left(\frac{\widehat{G}''(0)}{\widehat{G}(0)}-\frac{\widehat{G}'(0)^{2}}{\widehat{G}(0)^{2}}\right)\frac{z^{2}}{2}}\prod_{k}\left(1-\frac{z}{\lambda_{k}}\right)e^{\frac{z}{\lambda_{k}}+\frac{1}{2}\left(\frac{z}{\lambda_{k}}\right)^{2}}\left(1+\frac{z}{\lambda_{k}}\right)e^{-\frac{z}{\lambda_{k}}+\frac{1}{2}\left(-\frac{z}{\lambda_{k}}\right)^{2}}\\
 & = & \widehat{G}(0)e^{\left(\frac{\widehat{G}''(0)}{\widehat{G}(0)}-\frac{\widehat{G}'(0)^{2}}{\widehat{G}(0)^{2}}\right)\frac{z^{2}}{2}}\prod_{k}\left(1-\frac{z^{2}}{\lambda_{k}^{2}}\right)e^{\frac{z^{2}}{\lambda_{k}^{2}}}.
\end{eqnarray*}
This formula shows that the mapping $z^{2}\to e^{2\pi ifz}\widehat{G}(z)$
has an analytic branch, or in other words, there is an entire function
$T$ such that $T(z^{2})=e^{2\pi ifz}\widehat{G}(z)$. This means
that the Taylor series expansion of $e^{2\pi ifz}\widehat{G}(z)$
around $z=0$ can only contain even powers $z^{2n}$, $n>0$, with
the odd terms all being zero. Therefore, each term in (\ref{ELFourier})
is zero and $f$ satisfies the Euler-Lagrange equation (\ref{ELGeneral})
for any $W$. We only need to show that $f$ is in fact a minimum
of (\ref{NewMinCost}), which can be done using the standard ``direct
method'' from the calculus of variations (see \cite[p. 443-453]{Ev98}
and \cite{Da89} for details). By the conditions on $W$, the functional
in (\ref{NewMinCost}) is nonnegative and satisfies the bounds
\begin{eqnarray}
 &  & \alpha_{1}\left\Vert f\right\Vert _{L^{p,H}(\mathbf{J})}-\alpha_{1}\int_{I\times\mathbf{J}}\left|h\right|^{p}G(\mathbf{a},h)\, d\mathbf{a}\, dh\nonumber \\
 & \leq & \int_{I\times\mathbf{J}}W(f(\mathbf{a})-h)G(\mathbf{a},h)\, d\mathbf{a}\, dh\label{DMLower}\\
 & \leq & \alpha_{2}\left\Vert f\right\Vert _{L^{p,H}(\mathbf{J})}+\alpha_{2}\int_{I\times\mathbf{J}}\left|h\right|^{p}G(\mathbf{a},h)\, d\mathbf{a}\, dh,\label{DMUpper}
\end{eqnarray}
where $L^{p,H}(\mathbf{J})$ is defined in the same manner as (\ref{L2Weighted}).
The lower bound (\ref{DMLower}) together with the convexity of $W$
imply that the functional (\ref{NewMinCost}) is weakly lower semicontinuous
and has a minimum in $L^{p,H}(\mathbf{J})$ \cite[p. 448]{Ev98}.
On the other hand, the upper bound (\ref{DMUpper}) and the convexity
of $W$ imply that any feasible solution of (\ref{ELGeneral}) is
in fact a minimum of (\ref{NewMinCost}) \cite[p. 452]{Ev98}.
\end{proof}
$\quad$
\begin{proof}[Proof of Theorem \ref{ThmAsymptotic}]
Consider the fusion rule $g$ given by the sample mean $g(\mathbf{a})=\frac{1}{M}\sum_{m=1}^{M}a_{m}$.
Since $\mathrm{var}(A_{m}|H)$ is uniformly bounded, one form of the
law of large numbers \cite{Ci11} shows that $g(\mathbf{A}|H)$ converges
almost surely (and in distribution) to $H$. Since $H\in I$, this
also means that $g(\mathbf{A})\in K$ almost surely for sufficiently
large $M$, so $g$ is in the feasible set of (\ref{NewMinCost}).
For any test function $\phi\in C_{c}^{0}(I\times K)$,
\begin{eqnarray*}
\int_{I}\int_{K}\phi(c,h)\mathbf{d}_{C|H}(c,h)\mathbf{d}_{H}(h)dcdh & = & E(\phi(g(\mathbf{A}),H))\\
 & = & E(E(\phi(g(\mathbf{A}|H),H)))\\
 & \to & E(\phi(H,H))\\
 & = & \int_{I}\phi(h,h)\mathbf{d}_{H}(h)dh,
\end{eqnarray*}
Since $\mathbf{d}_{H}>0$, this is equivalent to saying that
\[
\int_{I}\int_{K}\phi(c,h)\mathbf{d}_{C|H}(c,h)dcdh\to\int_{I}\phi(h,h)dh,
\]
which is the statement that $\mathrm{\mathbf{d}}_{C|H}(c,h)\to\delta(c-h)$
in the weak-$\star$ sense. Under any cost function for which $f$
given by (\ref{FusionRule}) is optimal and $\mathrm{cost}(h,h)=0$,
we end up with
\begin{equation}
E(\mathrm{cost}(f(\mathbf{A}),H))\leq E(\mathrm{cost}(g(\mathbf{A}),H))\to0.\label{BayesZero}
\end{equation}
Now if $\mathbf{A}$ and $H$ satisfy the conditions of Theorem \ref{ThmLpFusion},
then the limit (\ref{BayesZero}) holds for all appropriate costs
$W(c-h)$ with $W(0)=0$. We want to show that the costs $(c-h)^{p}$
for even $p$ are numerous enough to approximate anything in a space
of test functions that $\mathbf{d}_{C|H}$ acts on. Since $\mathbf{d}_{C|H}$
is a bounded and nonnegative function, (\ref{BayesZero}) implies
that for each $h\in I$,
\begin{equation}
\int_{K}W(c-h)\mathbf{d}_{C|H}(c,h)dc\to W(0).\label{BayesW}
\end{equation}
This obviously holds for constant functions $W$ as well, where the
Bayes risk is independent of $f$ or $M$. This means that (\ref{BayesW})
also holds for functions in the linear span
\[
\mathbf{S}=\{W(x)=\sum_{l=0}^{L}d_{l}x^{2l}:x=c-h\in\Lambda,d_{l}\in\mathbb{R}\},
\]
where $\Lambda=\{c-h:c\in K,h\in I,c-h\geq0\}$. Since
\[
\int_{K}W(c-h)\mathbf{d}_{C|H}(c,h)dc\leq\left\Vert W\right\Vert _{L^{\infty}(\Lambda)},
\]
the limit (\ref{BayesW}) holds uniformly over all $W\in\mathbf{S}\cap\mathbf{B}$,
where $\mathbf{B}$ is the closed unit ball $\mathbf{B}=\{W:\left\Vert W\right\Vert _{L^{\infty}(\Lambda)}\leq1\}$.
It is easy to check that $\mathbf{S}$ is a vector space and an associative
algebra (i.e., the product of two functions in $\mathbf{S}$ is also
in $\mathbf{S}$), and for any two distinct points $x_{1}$ and $x_{2}$
in $\Lambda$, we can find $W\in\mathbf{S}$ such that $W(x_{1})\not=W(x_{2})$
by taking $W(x)=x^{2}$. Therefore, the Stone-Weierstrass theorem
\cite{Ru91} shows that $\mathbf{S}$ is dense in $C^{0}(\Lambda)$,
and consequently that $\mathbf{S}\cap\mathbf{B}$ is dense in $C^{0}(\Lambda)\cap\mathbf{B}$.
From (\ref{BayesW}) and the fact that $\mathbf{d}_{C|H}(\cdot,h)$
is even, we conclude that $\mathbf{d}_{C|H}(\cdot,h)\to\delta(\cdot-h)$
for every $h\in I$.
\end{proof}
$\quad$
\begin{proof}[Proof of Proposition \ref{ThmGauss}]
To simplify the notation, we define $A_{m+M/2}=B_{m}$ for $1\leq m\leq M/2$.
This just reflects the fact that two sensors with $M/2$ independent
(given $H$) features each are equivalent to $M$ sensors with one
such feature each, or to a single sensor with $M$ such features.
We have
\begin{eqnarray*}
f(\mathbf{A}) & = & \frac{\int_{-\infty}^{\infty}h\prod_{m=1}^{M}\left((2\pi v)^{-1/2}e^{-(A_{m}-uh)^{2}/(2v)}\right)\left((2\pi)^{-1/2}e^{-h^{2}/2}\right)dh}{\int_{-\infty}^{\infty}\prod_{m=1}^{M}\left((2\pi v)^{-1/2}e^{-(A_{m}-uh)^{2}/(2v)}\right)\left((2\pi)^{-1/2}e^{-h^{2}/2}\right)dh}\\
 & = & \frac{\int_{-\infty}^{\infty}(2\pi)^{-1/2}he^{-\left(\frac{Mu^{2}}{2v}+\frac{1}{2}\right)h^{2}+\frac{u}{v}\sum_{m=1}^{M}A_{m}h}dh}{\int_{-\infty}^{\infty}(2\pi)^{-1/2}e^{-\left(\frac{Mu^{2}}{2v}+\frac{1}{2}\right)h^{2}+\frac{u}{v}\sum_{m=1}^{M}A_{m}h}dh}\\
 & = & \frac{\left(\sum_{m=1}^{M}A_{m}\right)\exp\left(\frac{\left(\sum_{m=1}^{M}A_{m}u\right)^{2}}{2(Mu^{2}v+2v^{2})}\right)uv^{1/2}(Mu^{2}+v)^{-3/2}}{\exp\left(\frac{\left(\sum_{m=1}^{M}A_{m}u\right)^{2}}{2(Mu^{2}v+2v^{2})}\right)v^{1/2}(Mu^{2}+v)^{-1/2}}\\
 & = & \frac{u\sum_{m=1}^{M}A_{m}}{Mu^{2}+v}.
\end{eqnarray*}
This is (\ref{GaussFusion}). We next use the Fourier transform approach
from Theorem \ref{ThmPerformance} to find the performance of this
fusion rule. First,
\begin{eqnarray*}
E(e^{-2\pi izC}|H) & = & \int_{\mathbb{R}^{M}}e^{-2\pi iz\left(\frac{u}{Mu^{2}+v}\sum_{m=1}^{M}a_{m}\right)}\prod_{m=1}^{M}\left((2\pi v)^{-1/2}e^{-(a_{m}-uH)^{2}/(2v)}da_{m}\right).\\
 & = & \left(\int_{-\infty}^{\infty}e^{-2\pi iz\frac{ua_{m}}{Mu^{2}+v}}(2\pi v)^{-1/2}e^{-(a_{m}-uH)^{2}/(2v)}da_{m}\right)^{M}\\
 & = & \exp\left(-\frac{M}{(Mu^{2}+v)^{2}}\left(2\pi^{2}u^{2}vz^{2}+2\pi iu^{2}(Mu^{2}+v)Hz\right)\right).
\end{eqnarray*}
Some straightforward (although technical) calculations give
\begin{eqnarray*}
\mathbf{d}_{C|H}(c,h) & = & \int_{-\infty}^{\infty}E(e^{-2\pi izC}|H=h)e^{2\pi izc}dz\\
 & = & \frac{Mu^{2}+v}{u\sqrt{2\pi Mv}}\exp\left(-\frac{\left(Mu^{2}(c-h)+cv\right)^{2}}{2Mu^{2}v}\right)
\end{eqnarray*}
and
\begin{eqnarray*}
E((C-H)^{2}) & = & \int_{-\infty}^{\infty}\int_{-\infty}^{\infty}(c-h)^{2}\mathbf{d}_{C|H}(c,h)(2\pi)^{-1/2}e^{-h^{2}/2}dcdh\\
 & = & \frac{v}{u\sqrt{2\pi M(Mu^{2}+v)}}\int_{-\infty}^{\infty}e^{-\frac{Mu^{2}+v}{2Mu^{2}}c^{2}}dc\\
 & = & \frac{v}{Mu^{2}+v}.
\end{eqnarray*}
Finally, a similar calculation gives
\[
E(e^{-2\pi izH}|\mathbf{A})=\sqrt{\frac{v}{(Mu^{2}+v)(2\pi v)^{M}}}\exp\left(\frac{\left(2\pi ivz-u\sum_{m=1}^{M}A_{m}\right)^{2}}{2v(Mu^{2}+v)}-\frac{\left(\sum_{m=1}^{M}A_{m}\right)^{2}}{2v}\right),
\]
so $E(e^{-2\pi izH}|\mathbf{A})$ has no zeros in the complex plane
for any $\mathbf{A}$, and the other conditions of Theorem \ref{ThmLpFusion}
obviously hold.
\end{proof}
$\quad$
\begin{proof}[Proof of Corollary \ref{CorGaussSep}]
By Proposition \ref{ThmGauss} (\ref{GaussFusion}), the locally
optimal fusion rules at $A^{\star}$ and $B^{\star}$ are given by
$A^{\star}=\frac{\sum_{m=1}^{M/2}A_{m}}{M/2+1}$ and $B^{\star}=\frac{\sum_{m=1}^{M/2}B_{m}}{M/2+1}$.
These are Gaussian variables with means $\frac{M}{M+2}h$ and variances
$\frac{2M}{(M+2)^{2}}$. Using these inputs, we apply (\ref{GaussFusion})
again to determine the locally optimal system fusion rule $C$. The
fusion rule is $C=\frac{M+2}{2M+2}(A^{\star}+B^{\star})$ and its
Bayes risk is $\frac{M+2}{2M+2}\frac{\frac{2M}{(M+2)^{2}}}{\frac{M}{M+2}}=\frac{1}{M+1}$.
\end{proof}
$\quad$
\begin{proof}[Proof of Proposition \ref{ThmExpo}]
From the standard integral definition of the gamma function, the
fusion rule is
\begin{eqnarray*}
f(\mathbf{A}) & = & \frac{\int_{0}^{\infty}h\prod_{m=1}^{M}\left(he^{-A_{m}h}\right)e^{-h}dh}{\int_{0}^{\infty}\prod_{m=1}^{M}\left(he^{-A_{m}h}\right)e^{-h}dh}=\frac{\left(\sum_{m=1}^{M}A_{m}+1\right)^{-(M+2)}(M+1)!}{\left(\sum_{m=1}^{M}A_{m}+1\right)^{-(M+1)}M!}=\frac{M+1}{\sum_{m=1}^{M}A_{m}+1}.
\end{eqnarray*}
Note that we always have $C=f(\mathbf{A})\in(0,M+1]$. To determine
its performance, the formula (\ref{LevelCurve2}) is easier to apply
directly instead of taking the Fourier transform.
\begin{eqnarray*}
\mathbf{d}_{C|H}(c,h) & = & \int_{\mathbf{J}}\delta\left(c-f(\mathbf{a})\right)h^{M}e^{-h\sum_{m=1}^{M}a_{m}}d\mathbf{a}\\
 & = & \int_{\sum_{m=1}^{M}a_{m}=\frac{M+1}{c}-1}\frac{d}{dc}h^{M}e^{-h(\frac{M+1}{c}-1)}|d\mathbf{a}|\\
 & = & (M+1)h^{M+1}c^{-2}e^{-h(\frac{M+1}{c}-1)}\int_{0}^{\frac{M+1}{c}-1}\int_{0}^{a_{M}}...\int_{0}^{a_{3}}\int_{0}^{a_{2}}da_{1}da_{2}...da_{M-1}da_{M}\\
 & = & \frac{(M+1)h^{M+1}}{M!c^{2}}\left(\frac{M+1}{c}-1\right)^{M}e^{-h(\frac{M+1}{c}-1)}
\end{eqnarray*}

The quadratic Bayes risk can be evaluated in a similar way,
\begin{eqnarray*}
 &  & E((C-H)^{2})\\
 & = & \int_{0}^{M+1}\frac{M+1}{M!}\left(\frac{M+1}{c}-1\right)^{M}\int_{0}^{\infty}(c^{-2}h^{M+1}-2c^{-1}h^{M+2}+h^{M+3})e^{-h\frac{M+1}{c}}dhdc\\
 & = & \int_{0}^{M+1}\frac{M+1}{M!}\left(\frac{M+1}{c}-1\right)^{M}\left(\frac{c^{M+2}(M+1)!}{(M+1)^{M+2}}-\frac{2c^{M+2}(M+2)!}{(M+1)^{M+3}}+\frac{c^{M+4}(M+3)!}{(M+1)^{M+4}}\right)dc\\
 & = & \frac{M+3}{(M+1)^{M+2}}\int_{0}^{M+1}c^{2}(M+1-c)^{M}dc\\
 & = & \frac{2}{M+2}.
\end{eqnarray*}
\end{proof}

\end{document}